\newcolumntype{C}{>{$}c<{$}}
\newcolumntype{L}{>{$}l<{$}}
\theoremstyle{plain}
\newtheorem{theo}{Theorem}[section]
\newtheorem{prop}[theo]{Proposition}
\newtheorem{lemma}[theo]{Lemma}
\theoremstyle{definition}
\newtheorem{problem}[theo]{Problem}
\newcommand{\mc}[1]{\mathcal{#1}}
\newcommand{\mb}[1]{\mathbb{#1}}
\newcommand{\mbf}[1]{\mathbf{#1}}
\newcommand{\sm}{\setminus}
\newcommand{\eps}{\varepsilon}
\newcommand{\aA}{\alpha}
\newcommand{\bB}{\beta}
\newcommand{\gG}{\gamma}
\newcommand{\dD}{\delta}
\title{
Universality for transversal Hamilton cycles}
\author{Candida Bowtell}
\author{Patrick Morris}
\author{Yanitsa Pehova}
\author{Katherine Staden}
\thanks{CB: School of Mathematics, University of Birmingham, Edgbaston, Birmingham, United Kingdom, \href{mailto:c.bowtell@bham.ac.uk}{\tt c.bowtell@bham.ac.uk}, supported by ERC Starting Grant 947978 and Leverhulme Trust Early Career Fellowship ECF--2023--393. \\
\indent PM: Departament de Matem\`atiques, Universitat Polit\`ecnica de Catalunya (UPC),  Barcelona, Spain, \href{mailto:pmorrismaths@gmail.com}{\tt pmorrismaths@gmail.com}, supported by
		the Deutsche Forschungsgemeinschaft (DFG, German Research
		Foundation) Walter Benjamin program - project number
		504502205.\\
\indent YP: Department of Mathematics, London School of Economics, United Kingdom, \href{mailto:y.pehova@lse.ac.uk}{\tt y.pehova@lse.ac.uk}, supported by the Engineering and Physical Sciences Research Council, UK Research and Innovation [EP/V038168/1]. \\
\indent KS: School of Mathematics and Statistics, The Open University, Walton Hall, Milton Keynes, United Kingdom, \href{mailto:katherine.staden@open.ac.uk}{\tt katherine.staden@open.ac.uk}, supported by the Engineering and Physical Sciences Research Council, UK Research and Innovation [EP/V025953/1].\\
\indent MSC codes: 05C35, 05C38.
}
\begin{document}

\date{}

\begin{abstract} 
Let $\mathbf{G}=\{G_1, \ldots, G_m\}$ be a graph collection on a common vertex set $V$ of size $n$ such that $\delta(G_i) \geq (1+o(1))n/2$ for every $i \in [m]$. We show that $\mathbf{G}$ contains every Hamilton cycle pattern. That is, for every map $\chi: [n] \to [m]$ there is a Hamilton cycle whose $i$-th edge lies in $G_{\chi(i)}$. 
\end{abstract}

\maketitle

\section{Introduction}

The problem of determining whether a given graph contains a Hamilton cycle -- a cycle containing every vertex -- is a central problem in graph theory. It appears on Karp's famous list of NP-complete problems~\cite{karp}, and as such, much research has focused on finding sufficient conditions to guarantee the existence of a Hamilton cycle.
The best-known result of this type is Dirac's theorem from 1952~\cite{dirac}, which states that, in a graph $G$ on $n\ge 3$ vertices, minimum degree $\dD(G) \geq n/2$ suffices. Graphs satisfying these conditions are often called \emph{Dirac graphs}. There is a large literature on extensions of this theorem and other sufficient conditions for Hamiltonicity in various graph classes.

\subsection{Transversal embedding}

A recent fruitful research direction has been to extend classical results about graph embedding to graph collections. The general problem was posed by Joos and Kim~\cite{jooskim}, as follows. Given a collection $\mathbf{G}=\{G_1,\ldots,G_m\}$ of not necessarily distinct graphs on a common vertex set $V$, what conditions on these graphs imply that there is a copy of a given graph $J$ with at most $m$ edges, containing at most one edge from any $G_i$? 
 Such a copy of $J$ is called a \emph{transversal} or \emph{rainbow} copy of $J$, the latter coming from thinking of each $G_i$ as having a distinct colour.
In particular, if each $G_i$ satisfies the same condition $\mc{C}$ guaranteed to ensure that $G_i$ contains a copy of $J$ -- we say that \emph{$\mathbf{G}$ satisfies $\mc{C}$} -- does this imply that the collection $\mathbf{G}$ contains a transversal copy of $J$?
If so, we informally say that containing $J$ is \emph{colour-blind} with respect to $\mc{C}$. 
Proving that if every graph in $\mathbf{G}$ satisfies some property $\mc{C}$, then $\mathbf{G}$ contains a transversal copy of $J$ generalises the original embedding problem of proving that if a graph $G$ satisfies $\mc{C}$, then $G$ contains a copy of $J$, as we could have all graphs in $\mbf G$ being identical to $G$.
As usual in extremal graph theory, we seek \emph{best possible} conditions $\mc{C}$, meaning that for any weakening $\mc{C}'$ of $\mc{C}$, there is a graph collection satisfying $\mc{C}'$ which does not contain $J$.

There have been several Tur\'an-type transversal results, where $J$ is a fixed graph, $n$ is sufficiently large, and $\mc{C}$ is a lower bound on the number of edges in an $n$-vertex graph.
The classical theorem of Tur\'an from 1941~\cite{turan}, extending Mantel's theorem for triangles from 1907~\cite{mantel}, states for any $n$-vertex graph $G$ that $e(G) > e(T_{r-1}(n))$ is sufficient to guarantee that $G$ contains a copy of the $r$-clique $K_r$, where $T_{r-1}(n)$ is the $n$-vertex $(r-1)$-partite graph with parts as equal in size as possible (and of course, $T_{r-1}(n)$ itself witnesses that this is best possible).
Aharoni, DeVos, de la Maza, Montejano and \v{S}\'amal~\cite{Aharoni} showed that the transversal generalisation of Mantel's theorem is \emph{not} colour-blind. If $G_1,G_2,G_3$ are three graphs on the same $n$-vertex set, and $e(G_i)> tn^2$, where 
$t=\frac{26-2\sqrt{7}}{81} \approx 0.2557 > 1/4$, then there is a transversal copy of $K_3$, and this is best possible. The extension to larger cliques is still wide open. Recently, Babi\'nski, Grzesik and Prorok~\cite{bgp} obtained a version for oriented triangles in directed graphs.

There has also been a lot of progress in the setting where $J$ is spanning, and $\mc{C}$ is a minimum degree condition. Joos and Kim~\cite{jooskim} proved that the transversal generalisation of Dirac's theorem \emph{is} colour-blind, sharpening an approximate version due to Cheng, Wang and Zhao~\cite{Cheng1}. 
Given the difficulty of obtaining exact results in graph embedding problems, many results have focused on determining %\emph{approximate colour-blindness}, introduced in~\cite{mmp}, where we seek 
the minimum $d$ such that for all $\eps>0$ and $n$ sufficiently large, $\dD(G_i) \geq (d+\eps)n$ for all $G_i$ in a collection $\mathbf{G}$ of $n$-vertex graphs implies that $\mathbf{G}$ contains a transversal copy of $J$.
Note that $d$ is at least the value required to guarantee a copy of $J$ in a single graph, since all the graphs in the collection could be identical.
In this paper we consider uniform minimum degree conditions of this type, and we denote by $\delta(\mbf G)$ the smallest of all $\delta(G_i)$.

Transversal results have been obtained for matchings~\cite{jooskim}, $F$-factors for a fixed graph $F$~\cite{mmp}, spanning trees~\cite{mmp}, powers of Hamilton cycles~\cite{ghmps}, graphs of small bandwidth~\cite{CIKL}, Hamilton paths and cycles in tournaments~\cite{chakraborti2023hamilton} as well as
Hamilton cycles and other spanning structures in hypergraphs~\cite{Cheng2,ghmps}.
Each of these asymptotically generalise classical theorems in extremal graph theory, and in most cases the problem is ``approximately colour-blind'', but not always, for example for some $F$-factors (see \cite[Proposition 6.4]{mmp}). 

\subsection{Pattern embedding}

In this paper, we study a further generalisation of the transversal problem, where one seeks a copy of $J$ with a given colouring. %, which we call a \emph{colour pattern} of $J$.

\begin{problem}\label{prob:1}
    Given a graph collection $\mathbf{G}=\{G_1,\ldots,G_m\}$ on a common vertex set and a graph $J$, what conditions on $\mathbf{G}$ guarantee that for every colouring $\chi: E(J) \to \{1,\ldots,m\}$ of the edges of $J$, there is a copy of $J$ such that the image of $e$ is an edge of $G_{\chi(e)}$ for all edges $e$ of $J$?
\end{problem}

We call $\chi$ a \emph{colour pattern} or \emph{colouring} of $J$. When every graph in the collection is identical, or, equivalently, if $m=1$, we again recover the usual graph embedding problem. When $\chi$ is a bijection (so $m=e(J)$), we recover a stronger version of the transversal embedding problem.
Indeed, the transversal embedding problem asks for \emph{any} rainbow copy, whereas Problem~\ref{prob:1} asks for \emph{any given} colouring, which may be rainbow or may contain repeated colours, for example a Hamilton cycle with alternating red and blue edges or a Hamilton cycle made up of half-length red and blue paths. As we will show shortly, it suffices to solve the problem  for bijections $\chi$; that is, when the colour pattern we seek is rainbow, and every colour is used.

When $J$ is a matching and $\chi$ is a bijection, any rainbow copy of $J$ is a specific rainbow copy, since we can permute the edges.
Montgomery, M\"uyesser and Pehova~\cite{mmp} studied the instance of this problem where $J$ is an $F$-factor and each copy of $F$ is a distinct colour.
As far as we are aware, there are no other results towards Problem~\ref{prob:1}.

Graph collections and colour patterns appear in another guise in the area of \emph{temporal graphs}. A temporal graph is a graph which changes over time, so it can be represented as a graph collection $\mathbf{G}=\{G_1,\ldots,G_m\}$ in which $G_i$ is the copy of the graph at time $i$.
For example, a path with the colour pattern mapping the $i$-th edge in the natural order to colour $i$ corresponds to, in the temporal graph, following a path using at step $i$ the graph available at time $i$. This is known as a \emph{temporal journey}.
This perspective is of interest in the theoretical computer science community, and in particular determining how the complexity of graph algorithms changes in the temporal setting. We refer the interested reader to the survey of Michail~\cite{michail} for an introduction to this area.

\subsection{Main result}

The main result of this paper is an asymptotically best possible answer to Problem~\ref{prob:1} when $J$ is a Hamilton cycle and the condition is on minimum degree, showing that minimum degree $(1/2+o(1))n$ guarantees a Hamilton cycle with any given colour pattern. This answers a question of M\"{u}yesser.
\begin{theo}\label{thm:general} For every $\alpha>0$ there exists $n_0$ such that for every $n \geq n_0$ the following holds.
Let $m \in \mathbb{N}$ and let $\mbf{G}=\{G_1, \ldots, G_m\}$ be a graph collection on a common vertex set $V=[n]$, such that $\delta(\mathbf{G})\geq (1/2+\alpha)n$. Then, for every $\chi:[n] \to [m]$, there is a Hamilton cycle $C=e_1\ldots e_n$ such that $e_i \in G_{\chi(i)}$ for all $i \in [n]$.
\end{theo}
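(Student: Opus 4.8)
The plan is to prove Theorem~\ref{thm:general} via the absorption method, carefully adapted to handle the prescribed colour pattern $\chi$. We are given a collection $\mbf{G}=\{G_1,\dots,G_m\}$ with $\delta(\mbf{G})\ge(1/2+\alpha)n$ and a colouring $\chi:[n]\to[m]$, and we must build a Hamilton cycle $C=e_1\dots e_n$ whose $i$-th edge lies in $G_{\chi(i)}$. The first reduction is to observe that the positions $1,\dots,n$ around the cycle come with \emph{fixed} colour demands; equivalently, we are looking for a spanning subgraph that is a cyclic sequence of vertices $v_1v_2\cdots v_nv_1$ with $v_iv_{i+1}\in G_{\chi(i)}$. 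So the combinatorial object to be constructed is a bijection $[n]\to V$, and the ``graph'' at step $i$ changes with $i$. The key structural fact to exploit is that \emph{every} $G_i$ is a Dirac graph, so in particular for any two vertices $u,w$ and any colour $c$, the common neighbourhood $N_{G_c}(u)\cap N_{G_c}(w)$ has size at least $2\alpha n$; more generally, given any short sequence of colours $c_1,\dots,c_t$ and any prescribed endpoints, one can greedily find a rainbow-pattern path realising that colour sequence and connecting the endpoints, provided the set of available vertices is not too small.

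\textbf{Absorbing structure.}
The heart of the argument is to build an \emph{absorbing path} $P_{\mathrm{abs}}$ that occupies a prescribed block of consecutive positions on the cycle, say positions $1,\dots,\ell$ with $\ell=\Theta(n)$, such that $P_{\mathrm{abs}}$ can ``swallow'' any leftover set $R$ of vertices of bounded size while still respecting the colour demands on its positions. The standard device is absorbing gadgets: for a vertex $v$, an absorber is a short path segment on some block of positions $[i,i+s]$ that realises the colour pattern $\chi(i),\dots,\chi(i+s)$ both with and without $v$ inserted in the middle. Concretely, I would look for, for each $v\in V$ and each eligible position-block, configurations of the form $x\,a\,b\,y$ (realising colours $\chi(i),\chi(i+1),\chi(i+2)$) together with $x\,a\,v\,b\,y$ — but note the latter uses \emph{four} edges and hence four positions, so the two versions occupy different numbers of positions. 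To fix the length discrepancy one instead uses absorbers that are length-preserving: a block of positions where either $v$ is present or a ``dummy'' vertex from a reservoir is present. So the correct setup reserves a set $W$ of vertices to be used as fillers, builds for each $v$ many candidate absorbing blocks that toggle between containing $v$ and containing a reserved vertex $w\in W$, and chooses a random subfamily so that every $v$ has $\ge$ some absorbers and no position/vertex is overused. Proving that enough absorbers exist for every $v$ and every colour pattern is where the Dirac condition on each $G_c$ is used repeatedly (each ``link'' in a gadget is a common-neighbourhood argument), and a probabilistic deletion/partition step then produces the single absorbing path on a block of consecutive positions.

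\textbf{Almost-cover and connecting step.}
Having set aside the absorbing path on positions $[1,\ell]$, and a small connecting reservoir, I would cover \emph{almost} all remaining vertices using the remaining positions $[\ell+1,n]$. Here I expect to use a regularity- or blow-up-type argument, or alternatively an iterative path-cover argument: partition the position-block $[\ell+1,n]$ into $O(1)$ many long sub-blocks, and in each sub-block greedily (or via a result on rainbow/patterned long paths in Dirac-type collections, in the spirit of the cited transversal Hamiltonicity results) build a long path realising the required colour subsequence, covering all but $o(n)$ vertices overall, with endpoints in prescribed small flexible sets. Then connect these $O(1)$ paths together and to the two ends of $P_{\mathrm{abs}}$ using the connecting reservoir and short patterned paths (again common-neighbourhood arguments for each colour); finally feed the $o(n)$ uncovered vertices $R$ into $P_{\mathrm{abs}}$ via the absorbers, closing up the Hamilton cycle with exactly the colour pattern $\chi$ on every position.

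\textbf{Main obstacle.}
The main difficulty, and the place where this problem is genuinely harder than the transversal (rainbow, all colours used once) case of Joos–Kim, is that the colour pattern $\chi$ is \emph{adversarial and arbitrary}: it may be highly repetitive (e.g.\ long monochromatic stretches) or structured so that certain colours only appear in awkward position-blocks. This forces the absorbers to be built \emph{position-aware} — an absorber for $v$ must realise whatever colour subsequence $\chi$ dictates on the block it occupies, so one cannot freely permute colours as in the matching case. Thus the crux is proving a robust lemma of the form: \emph{for every colour sequence $c_1,\dots,c_s$ (with $s$ bounded), every vertex $v$, and every linear-sized vertex set, there are linearly many internally-disjoint patterned paths of the prescribed colour sequence that contain $v$ in a fixed interior slot and have endpoints ranging over linear-sized sets} — uniformly over all patterns. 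Establishing this, and then the matching-type argument that assembles these into one absorbing path covering a consecutive block of positions, is where the real work lies; the almost-cover and connecting steps are comparatively routine once the right patterned-connectivity lemma is in hand.
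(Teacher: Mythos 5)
Your high-level strategy (absorption with position-aware gadgets, an almost-cover, and short patterned connections through a reservoir) is indeed the one the paper uses, and you have correctly isolated the central difficulty: every position on the cycle carries a fixed colour demand, so an absorber must output the same coloured segment no matter which vertex it swallows. The paper also first performs the easy reduction you do not mention -- replicating $G_{\chi(i)}$ to pass to the case $m=n$ with $\chi$ the identity (Theorem 1.3) -- which costs nothing but cleans up the colour indexing throughout. However, your absorber design has a genuine gap. Toggling a slot between a leftover vertex $v$ and a dummy $w$ from a reservoir $W$ preserves the number of positions but not the set of covered vertices: swapping $w$ out to make room for $v$ orphans $w$, and this cascades. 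Relatedly, an absorbing path occupying a fixed block of positions covers a fixed number of vertices, so it cannot absorb an arbitrary leftover set $R\subseteq V$ of size $o(n)$ on top of its fixed backbone. The flexibility has to be confined to a designated pool, and the almost-cover must be engineered so that the only vertices it leaves behind lie inside that pool; letting the almost-cover miss an arbitrary $o(n)$-sized residue of $V$, as you propose, is not something the absorber can repair.

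The paper handles both issues at once. The gadget $F_\ell$ (Proposition 5.2) is not a two-state toggle: it carries a flexible set $L$ of $\ell+1$ vertices and, for every $v\in L$, produces an $x,y$-path of the exact same length $4\ell+2$ realising the exact same position-indexed rainbow pattern, covering a fixed backbone together with precisely one vertex of $L$. No length shifts and no dummy is ever orphaned, because the gadget always covers the same count of vertices and the uncovered members of $L$ are handled by other gadgets. Which gadget absorbs which vertex is decided by a perfect matching in Montgomery's robustly matchable bipartite graph (Lemma 5.1), yielding Lemma 3.1: an absorbing path that, for any $\beta n$-subset $Z'$ of a reservoir $Z$, covers $A\cup Z'$ exactly in colours $1,\dots,t$. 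The almost-cover (the random-greedy path builder of Lemma 3.2, analysed by a nibble) together with cherry connections through $Z$ (Proposition 2.4) then covers all of $V\setminus(A\cup Z)$ and all but exactly $\beta n$ vertices of $Z$, so the absorber is only ever asked to handle a subset of $Z$ of the prescribed size. This structured-matching step is precisely what your ``random subfamily'' sketch is missing, and the patterned-connectivity lemma you call for is supplied by Proposition 2.4 and the more general Lemma 5.4.
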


As promised, Theorem~\ref{thm:general} can be easily deduced from the following special case where $m=n$ and $\chi$ is the identity colouring. We use round brackets to emphasise the implicit ordering of $\{G_1,\ldots,G_n\}$ which encodes this canonical colouring. 

\begin{theo}\label{thm:main} For every $\alpha>0$ there exists $n_0$ such that for every $n \geq n_0$ the following holds.
Let $\mbf{G}=(G_1, \ldots, G_n)$ be an ordered graph collection on a common vertex set $V=[n]$, such that $\delta(\mathbf{G})\geq (1/2+\alpha)n$. Then $\mathbf{G}$ contains a Hamilton cycle $C=e_1\ldots e_n$ such that $e_i\in G_i$ for all $i\in[n]$.
\end{theo}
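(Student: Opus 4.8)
The plan is to use the absorption method, adapted to the ordered/coloured setting. The key conceptual point is that a transversal Hamilton cycle $e_1\dots e_n$ with $e_i \in G_i$ can be thought of as a Hamilton cycle together with a cyclic labelling of its edges by $1,\dots,n$ in order; equivalently, we seek a cyclic ordering $v_1,\dots,v_n$ of $V$ and a ``starting point'' $j$ such that $v_iv_{i+1} \in G_{i+j}$ for all $i$ (indices mod $n$). Since we only care about a single such cycle, we may fix the rotation and aim to build the cycle so that its $i$-th edge lies in $G_i$. Throughout we work with the auxiliary structure: for each position $i \in [n]$ we have a ``slot'' which must be filled by an edge of $G_i$, and every graph $G_i$ has $\delta(G_i) \ge (1/2+\alpha)n$.

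\textbf{Step 1: Absorbing structure.} First I would build an absorbing path $P_{\mathrm{abs}}$ occupying a set of consecutive slots, say slots $1,\dots,\ell$ for some $\ell = \Theta(\alpha n)$, with the following property: for any set $W \subseteq V$ with $|W| \le \beta n$ (for suitable $\beta \ll \alpha$), the path $P_{\mathrm{abs}}$ has a ``reconfiguration'' using exactly the same slots that spans $V(P_{\mathrm{abs}}) \cup W$. The standard way to do this is via absorbing gadgets: for each vertex $v$, find many short paths (on a constant number of slots) that can ``swallow'' $v$, using that each $G_i$ is a Dirac graph so common neighbourhoods of pairs of vertices are large (size $\ge 2\alpha n$). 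One subtlety here, compared to the uncoloured case, is that the gadget for $v$ must work in the specific slots it will eventually occupy; I would handle this by proving that the relevant gadgets exist for \emph{every} window of consecutive slots (the minimum-degree condition is uniform across colours, so this is fine), then randomly selecting a set of gadgets and using a concentration/counting argument (as in Montgomery's absorber-path method) to get a single path that absorbs any small $W$.

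\textbf{Step 2: Connecting and almost-spanning path.} After reserving the slots for $P_{\mathrm{abs}}$, I would use the remaining slots to build a long path covering all but $o(n)$ vertices, with prescribed endpoints matching those of $P_{\mathrm{abs}}$ so the two can be joined into a cycle. The tool here is a ``connecting lemma'': in a Dirac graph any two vertices can be joined by a path of any prescribed (not too small) length, and I need the coloured version — given a window of consecutive slots and prescribed endpoints, there is a path through those slots. Since each $G_i$ is Dirac, a greedy argument extending the path one vertex at a time works: at each step we must pick the next vertex in $N_{G_i}(\text{current endpoint})$ avoiding used vertices, and $|N_{G_i}(\cdot)| \ge (1/2+\alpha)n$ beats the number of used vertices until $o(n)$ vertices remain. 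To get an \emph{almost-spanning} path rather than just a long one, I would iterate this greedy extension, or more robustly use a coloured version of an almost-perfect-matching / path-cover argument; the $\alpha n/2$ slack in each colour's minimum degree is what keeps the greedy step alive until only $\beta n$ vertices are left.

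\textbf{Step 3: Absorb and close up.} Let $W$ be the (at most $\beta n$) vertices missed by the almost-spanning path. Apply the absorbing property of $P_{\mathrm{abs}}$ to swallow $W$, then join the two endpoints using the one or two remaining slots (again via the minimum-degree condition), yielding a transversal Hamilton cycle $e_1\dots e_n$ with $e_i \in G_i$.

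\textbf{Main obstacle.} I expect the main difficulty to be Step 1: constructing the absorber while respecting the rigid slot structure. In the uncoloured Dirac setting one has total freedom in where absorbing gadgets sit; here each gadget must be realised in a prescribed block of colours, and the gadgets must be threaded together into one path without colour conflicts. The saving grace is that the minimum-degree hypothesis is the \emph{same} for every colour, so ``local'' existence statements for gadgets hold in every window; the work is in the global assembly (a randomised selection plus a second-moment or Chernoff-type concentration bound, in the spirit of Montgomery's method and its transversal adaptations in~\cite{mmp,ghmps}). A secondary technical point is bookkeeping the cyclic index arithmetic mod $n$ consistently across all three steps, and ensuring the slack parameters satisfy $\beta \ll \beta' \ll \alpha$ as required by the absorber.
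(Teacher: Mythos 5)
Your high-level scheme (absorb, almost-spanning path, connect) matches the paper's, and you correctly identify the central obstacle — the rigid slot structure — and correctly guess the remedy of a Montgomery-style template. But the absorber you specify in Step~1 cannot exist, and this is not a technicality: a path occupying slots $1,\dots,\ell$ has \emph{exactly} $\ell+1$ vertices, full stop. If it is to use ``exactly the same slots'' after reconfiguration, it still has $\ell+1$ vertices, so it cannot span $V(P_{\mathrm{abs}})\cup W$ for a set $W$ of \emph{variable} size $\le\beta n$. In the uncoloured Dirac setting absorbers may lengthen as they swallow leftover vertices; here the colour pattern pins the length. The paper's fix, which your proposal does not capture, is that the absorber absorbs a set of \emph{fixed} size $\beta n$, and the flexibility is only in \emph{which} $\beta n$ vertices are absorbed from a pre-designated larger reservoir $Z$ of size $(\beta+\gamma)n+2$: the absorbing path always has length exactly $t=a+\beta n+1$ (Lemma~\ref{lemma:abspath}). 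This in turn forces Step~2 to be correspondingly rigid: the almost-spanning part must cover \emph{every} vertex outside $A\cup Z$ and hit exactly $\gamma n$ vertices of $Z$, leaving precisely $\beta n$ of $Z$ for the absorber — ``all but $o(n)$ with an arbitrary leftover $W$'' is not enough, because the absorber has no way to accept a leftover that isn't a $\beta n$-subset of $Z$.

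A secondary issue: the greedy extension you sketch in Step~2 stalls once $(1/2+\alpha)n$ vertices are used, so it cannot by itself cover $V\sm(A\cup Z)$. The paper instead random-partitions $V\sm(A\cup Z)$ into $K$ parts and repeatedly applies the bipartite Dirac theorem (Lemma~\ref{lem:hall}) to produce a factor of rainbow $K$-paths with prescribed colour pattern, peeling one off at random and re-verifying the degree condition via a nibble-type concentration argument (Lemma~\ref{lemma:pathcoll}); the paths are then stitched together by cherry connections through $Z$. Your ``path-cover argument'' hedge points in this direction, but the accounting that makes the absorber usable — covering $V\sm(A\cup Z)$ exactly and leaving exactly $\beta n$ vertices of $Z$ — is the load-bearing part and is missing from the proposal.
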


Theorem~\ref{thm:general} is obtained from Theorem~\ref{thm:main} by replicating graphs whose colour must appear more than once. Indeed, 
let $\chi:[n] \to [m]$. We may assume that $m \leq n$ by removing graphs from the collection, since the image of $\chi$ has size at most $n$.
For each $j\in[m]$ let $A_j:=\chi^{-1}(j)\subseteq [n]$ be the preimage of $j$ under the colouring. These preimages form a partition $[n]=A_1 \cup \ldots \cup A_m$.
For each $j\in[m]$ and $i\in A_j$, let $H_i:=G_{j}$.
Theorem~\ref{thm:main} applied to the ordered collection $(H_1,\ldots,H_n)$ yields a Hamilton cycle as required by Theorem~\ref{thm:general}.

\subsection{Tightness }
\label{sec:counterex}
 As with the transversal version of Dirac's theorem proven by Joos and Kim \cite{jooskim}, a minimum degree condition of $\delta(\mathbf{G})\geq n/2$ is certainly necessary for Theorem \ref{thm:main} (and Theorem \ref{thm:general}) as otherwise one can simply take $\bf{G}$ to be copies of some non-Hamiltonian graph. 
 This shows that our results are asymptotically tight. 
 However, unlike the case of finding only one transversal Hamilton cycle, it turns out that in the context of guaranteeing \emph{every} colour pattern of a Hamilton cycle, a condition of $\delta(\mathbf{G})\geq n/2$ is \emph{not} sufficient. This follows from the following construction due to Gupta, Hamann, Parczyk and Sgueglia (personal communication).  Let $n$ be even and let ${\bf{G}}=\{G_1,G_2\}$ with $G_1$ consisting of the union of two disjoint copies of $K_{\frac{n}{2}}$ with a perfect matching between them, and $G_2$ a copy of $K_{\frac{n}{2}, \frac{n}{2}}$, where the vertices are partitioned in the same way as for $G_1$. We have that $\delta(\mathbf{G})\geq n/2$ but $\mathbf{G}$ has no  Hamilton cycle with two consecutive edges from $G_1$ and the remaining $n-2$ edges in $G_2$. To see this, note that starting the cycle with two edges from $G_1$ either means using three vertices in one copy of $K_{\frac{n}{2}}$ or starting with an edge in one copy of $K_{\frac{n}{2}}$ followed by an edge to the other copy. In both cases, to complete to the desired colour pattern we may take only crossing edges, but completing to a Hamilton cycle requires an edge inside one of the parts.

 The authors do not know of any counterexamples to suggest that $\delta(\mathbf{G})\geq n/2+1$ is not sufficient to guarantee all Hamilton cycle patterns. Moreover, this example only yields a lower bound of at least $ n/2+1$ for even $n$. Determining a tight non-asymptotic minimum degree condition remains an intriguing open problem; see our concluding remarks for more on this. 

 \subsection{Related counting results}

Counting different Hamilton cycles in Dirac graphs has also received a lot of attention, and recently this has been  extended to transversal Hamilton cycles. Bradshaw, Halasz, and Stacho~\cite{bhs} showed that given a collection of $n$ Dirac graphs on the same vertex set $[n]$, there are at least $(cn/e)^{cn}$ different transversal Hamilton cycles for some constant $c \geq 1/68$. From Theorem~\ref{thm:main} we immediately recover the stronger lower bound, that there exist at least $n!\sim \sqrt{2\pi n}(n/e)^n$ different transversal Hamilton cycles in a collection of $n$ asymptotically Dirac graphs. Very recently, however, Anastos and Chakraborti~\cite{ac} independently improved this to show that there exists a constant $C>0$ such that there are $(Cn)^{2n}$ different transversal Hamilton cycles in a collection of $n$ Dirac graphs, which is tight up to the choice of $C$.
Just as our result on patterns gives a weaker (but non-trivial) lower bound on counting transversal Hamilton cycles, by pigeonhole their result yields a weaker but non-trivial lower bound on the proportion of all patterns of Hamilton cycles that can be found in any collection of $n$ Dirac graphs.

\subsection{Proof strategy}
Our proof follows an absorption strategy and thus splits the theorem into the tasks of finding an appropriate absorbing structure and an almost spanning substructure of the desired Hamilton cycle. 
Although many of the previous approaches \cite{CIKL,chakraborti2023hamilton,Cheng2,chwwy,Cheng1,ghmps,mmp} to transversal problems also appeal to the same general strategy, we remark that none of them (and indeed no other previous proof schemes \cite{staden_rainbow,jooskim}) apply to pattern embedding problems as they all crucially use the flexibility given by being able to produce \emph{any} rainbow structure rather than one specified colour pattern. 
This is discussed in more detail in Section \ref{sec:proof overview}.

Our  absorbing structure will output a subpath of our Hamilton cycle and provide flexibility for which vertices will appear in this path. Crucially, we require that it outputs the \emph{same segment} of the desired edge-coloured Hamilton cycle, no matter which set of vertices are left to absorb. In order to achieve this, we construct an absorbing gadget which can absorb any one of some constant number of vertices into a small path with a fixed colouring. Gadgets are then assigned to vertex sets using the robustly matchable bipartite graph introduced by Montgomery \cite{mont} as a template. 

For the almost spanning substructure we split the desired edge-coloured path into many small subpaths and use a random process to find these subpaths one at a time. Throughout this process, we use Dirac's theorem for bipartite graphs (Lemma~\ref{lem:hall}) as a black box, in order to guarantee matchings which we glue together  to obtain many copies of our desired subpath. 

We refer the reader to Section \ref{sec:proof overview} for a more detailed overview of the proof.

\subsection{Organisation} In Section \ref{sec:notation} we fix some notation and provide some basic tools and results that will be used in our proofs. In Section \ref{sec:proof overview} we then outline our proof and give two key lemmas, Lemma \ref{lemma:abspath} and Lemma \ref{lemma:pathcoll}. We then use these lemmas to prove our main theorem in Section \ref{sec:mainproof}. Section \ref{sec:absorption} is then devoted to proving Lemma \ref{lemma:abspath} and Section \ref{sec:path cover} to Lemma \ref{lemma:pathcoll}. Finally in Section \ref{sec:conclude} we give some remarks on further avenues of research. 

\section{Notation and preliminaries}
\label{sec:notation}
We use standard graph-theoretic notation. Given a graph $G$, $v \in V(G)$ and $U \subseteq V(G)$, 
we write $N_G(v,U) := N_G(v) \cap U$ and $ d_G(v,U) := |N_G(v,U)|$.
We often think of $G$ as a set of edges, so use $G$ as shorthand for its edge-set $E(G)$.
We denote by $P=e_1e_2\ldots e_s$ the path of length $s$, which has $s+1$ vertices and $s$ edges,
and by $C=e_1e_2\ldots e_s$ the cycle of length $s$, which has $s$ vertices and $s$ edges.
We denote the first vertex of $P$ by $v_P^-$ and the last vertex of $P$ by $v_P^+$.
An \emph{$x,y$-path} is a path whose degree-1 vertices are $x$ and $y$. 

Throughout the paper we often omit floors and ceilings in the interest of readability. In any such case the result can be made precise without non-trivial changes to the proof. We use standard notation for the hierarchy of constants. That is, the statement ``Let $0<a\ll b\ldots$" is used to mean that there is  a non-decreasing function $f$ such that all following statements concerning these constants hold for any choice of constants $0<a\le f(b)$. Longer hierarchies are defined analogously, with constants being chosen from right to left.  

We now give notation specific to graph collections and transversal subgraphs. Given an ordered graph collection $\mathbf{G}=(G_1, \ldots, G_n)$ on a common vertex set $V$, we define $\delta(\mathbf{G}):= \min_{G \in \mathbf{G}} \delta(G)$.  For vertex subsets $A,B\subset V$, we define $\mathbf{G}[A,B]$ to be the collection of graphs $(G_1[A,B],\ldots,G_n[A,B])$  induced between sets $A$ and $B$ and $\mathbf{G}[A]$ is defined similarly.  We say that graph $G_i$ {\it has colour $i$} and for any edge-coloured graph $J$ on vertex set $V$ with edges coloured in colours from $[n]$, we say that {\it $J$ is a subgraph of $\mathbf{G}$} if for every $i \in [n]$ and every edge $e$ in $J$ of colour $i$ we have $e \in G_i$.
Given a graph $H$, a map $\chi: E(H) \rightarrow [n]$ is a \emph{colour pattern} of $H$.
When $H$ is a path of length $s \leq n$, we often use the index set $[s]$ of its edges to define a colour pattern. 
Given a map $\chi: [s] \rightarrow [n]$ for some $s \leq n$, let $P=e_1e_2\ldots e_s$ be a path of length $s$ with $V(P) \subseteq V$. We say that $P$ is {\it exactly $\chi$-coloured} if for every $i \in [s]$, $e_i \in G_{\chi(i)}$. 

In the rest of this section we give a few standard graph theory tools we will use for the rest of the paper. The first is the well-known concentration inequality of Chernoff. The following lemma holds for any binomial random variable $X$. As discussed in \cite[Section 21.5]{frieze2015introduction}, concentration results for the binomial distribution transfer to the hypergeometric distribution.

\begin{lemma}[Chernoff bounds {\cite[Corollary 21.7]{frieze2015introduction}}]    \label{lem:chernoff}
    Let $\eps>0$ and let $X$ be a binomially or hypergeometrically distributed random variable with mean $\mu$. Then
    \[\mb P(|X-\mu|\ge \eps \mu)\le 2e^{-\eps^2\mu/3}.\]

\end{lemma}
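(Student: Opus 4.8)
The final statement to prove is the Chernoff bound itself, namely that for a binomially or hypergeometrically distributed random variable $X$ with mean $\mu$ and any $\eps>0$, one has $\mb P(|X-\mu|\ge \eps\mu)\le 2e^{-\eps^2\mu/3}$. Although the excerpt simply cites \cite[Corollary 21.7]{frieze2015introduction}, a self-contained proof runs as follows. The plan is to treat the binomial case by the exponential moment (Bernstein--Chernoff) method, then transfer to the hypergeometric case by a stochastic-domination / coupling argument as indicated in the reference.

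\textbf{Step 1: the upper tail for the binomial case.}
Write $X=\sum_{i=1}^N X_i$ with $X_i$ independent Bernoulli$(p_i)$ and $\mu=\sum_i p_i$ (for the standard Binomial$(N,p)$ all $p_i=p$). For $t>0$ apply Markov's inequality to $e^{tX}$:
\[
\mb P(X\ge (1+\eps)\mu)\le e^{-t(1+\eps)\mu}\,\mb E\,e^{tX}=e^{-t(1+\eps)\mu}\prod_{i=1}^N\bigl(1+p_i(e^t-1)\bigr)\le e^{-t(1+\eps)\mu}\exp\!\Bigl(\mu(e^t-1)\Bigr),
\]
using $1+x\le e^x$. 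Optimising over $t$ with the choice $t=\ln(1+\eps)$ gives the classical bound $\mb P(X\ge(1+\eps)\mu)\le\bigl(e^{\eps}/(1+\eps)^{1+\eps}\bigr)^{\mu}$, and the elementary inequality $(1+\eps)\ln(1+\eps)-\eps\ge \eps^2/3$ for $0<\eps\le 1$ (a one-variable calculus check: the difference of the two sides is $0$ at $\eps=0$ and has nonnegative derivative on $(0,1]$) yields $\mb P(X\ge(1+\eps)\mu)\le e^{-\eps^2\mu/3}$. For $\eps>1$ one checks directly that $(1+\eps)\ln(1+\eps)-\eps\ge \eps^2/3$ still holds (indeed the left side grows faster), so the bound persists for all $\eps>0$ up to replacing $\eps^2$ by $\min(\eps,\eps^2)$; since the statement as written uses $\eps^2/3$ we restrict attention to the regime $\eps\le 1$ where it is exactly as claimed, and for $\eps>1$ note $\mb P(X\ge(1+\eps)\mu)\le\mb P(X\ge 2\mu)\le e^{-\mu/3}$. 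Symmetrically, the lower tail $\mb P(X\le(1-\eps)\mu)$ is bounded by applying Markov to $e^{-tX}$ with $t>0$, yielding $\mb P(X\le(1-\eps)\mu)\le\bigl(e^{-\eps}/(1-\eps)^{1-\eps}\bigr)^{\mu}\le e^{-\eps^2\mu/2}\le e^{-\eps^2\mu/3}$, using $(1-\eps)\ln(1-\eps)+\eps\ge\eps^2/2$ for $0\le\eps<1$. Adding the two tail bounds gives $\mb P(|X-\mu|\ge\eps\mu)\le 2e^{-\eps^2\mu/3}$ in the binomial case.

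\textbf{Step 2: transfer to the hypergeometric case.}
Let $X\sim\mathrm{Hyp}(N,K,\ell)$ count the white balls in a sample of size $\ell$ drawn without replacement from an urn with $K$ white and $N-K$ black balls, so $\mu=\ell K/N$. Two routes are available. The first is Hoeffding's observation that $X$ has the same mean as $Y\sim\mathrm{Bin}(\ell,K/N)$ and that for every convex function $\phi$ one has $\mb E\,\phi(X)\le\mb E\,\phi(Y)$ (sampling without replacement is a "more averaged" process); applying this with $\phi(x)=e^{tx}$ and $\phi(x)=e^{-tx}$ makes the entire exponential-moment computation of Step 1 go through verbatim, giving the identical conclusion. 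The second, equivalent, route is the coupling/martingale argument referenced in \cite[Section 21.5]{frieze2015introduction}: reveal the sampled balls one at a time and apply Azuma--Hoeffding, or directly dominate. Either way, no term in the bound changes, so $\mb P(|X-\mu|\ge\eps\mu)\le 2e^{-\eps^2\mu/3}$ holds for hypergeometric $X$ as well.

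\textbf{Main obstacle.}
There is no deep obstacle here, since this is a textbook inequality; the only mildly delicate points are (i) verifying the elementary calculus inequalities $(1+\eps)\ln(1+\eps)-\eps\ge\eps^2/3$ and $(1-\eps)\ln(1-\eps)+\eps\ge\eps^2/2$ cleanly across the relevant ranges of $\eps$, being careful about the regime $\eps>1$, and (ii) justifying the comparison $\mb E\,\phi(X)\le\mb E\,\phi(Y)$ between the hypergeometric and binomial distributions for convex $\phi$. Given that the paper only needs this lemma as a black-box tool and explicitly cites a standard reference for it, the natural course is to simply invoke \cite[Corollary 21.7]{frieze2015introduction} together with the remark in \cite[Section 21.5]{frieze2015introduction} for the hypergeometric transfer, rather than reproducing the proof in full.
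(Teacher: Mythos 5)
The paper does not prove this lemma at all: it is quoted verbatim as a black box from \cite[Corollary 21.7]{frieze2015introduction}, with the hypergeometric case covered by the remark in \cite[Section 21.5]{frieze2015introduction}. Your closing observation that simply invoking the reference is the appropriate course is therefore exactly what the paper does. Your self-contained argument is the standard one (exponential moments plus Hoeffding's convex-ordering comparison of sampling without and with replacement), and for $0<\eps\le 1$ it is correct; the calculus inequalities $(1+\eps)\ln(1+\eps)-\eps\ge\eps^2/3$ on $(0,1]$ and $(1-\eps)\ln(1-\eps)+\eps\ge\eps^2/2$ on $[0,1)$ both check out, and the Hoeffding transfer $\mb E\,\phi(X)\le\mb E\,\phi(Y)$ for convex $\phi$ is the right tool.

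The flaw is in your treatment of $\eps>1$. The parenthetical claim that $(1+\eps)\ln(1+\eps)-\eps\ge\eps^2/3$ ``still holds (indeed the left side grows faster)'' is false: the left side grows like $\eps\ln\eps$, the right like $\eps^2/3$, and already at $\eps=3$ one has $4\ln 4-3\approx 2.55<3$. Your fallback $\mb P(X\ge 2\mu)\le e^{-\mu/3}$ does not rescue the claim either, since for $\eps>1$ the target $e^{-\eps^2\mu/3}$ is strictly smaller than $e^{-\mu/3}$. In fact no argument can close this gap, because the inequality as literally stated fails for large $\eps$ and small $\mu$: for $X\sim\mathrm{Bin}(n,1/n)$ (so $\mu=1$) and $\eps=10$, $\mb P(X\ge 11)\approx e^{-1}/11!\approx 9\times 10^{-9}$, whereas $2e^{-\eps^2\mu/3}=2e^{-100/3}\approx 7\times 10^{-15}$. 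The correct resolution is the one you half-state: the exponent must be $\min(\eps,\eps^2)$ (equivalently, the clean $\eps^2/3$ form requires $\eps\le 1$), which is how the cited corollary should be read. This is harmless for the paper, since every application there uses $\eps$ bounded by a small constant, but your write-up should delete the false ``still holds'' assertion and state the restriction explicitly.
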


To embed long paths in our graph collection, we will use a simple consequence of Hall's matching theorem which we state without proof.%\ks{Refer to this also as Dirac's theorem in bipartite graphs, since this is how we refer to it elsewhere?}
\begin{lemma}\label{lem:hall}
    Let $G=(A,B)$ be a bipartite graph with $|A|=|B|=n$ and $\delta(G)\geq n/2$. Then $G$ contains a perfect matching.
\end{lemma}

This is also sometimes referred to as Dirac's theorem for bipartite graphs. Our next proposition shows how we can partition the vertex set of a graph and maintain  degrees to both parts. 
\begin{prop}\label{prop:Z}
Let $0 < 1/n \ll \alpha, \beta$. 
Let $\mathbf{G}=(G_1, \ldots, G_n)$ be a graph collection on a common vertex set $V=[n]$ with $\delta(\mbf G) \geq (1/2+\alpha)n$. Then there exists $Z \subseteq V$ such that $|Z|= \beta n$ and for every $i \in [n]$ and $v \in V$, 
%\begin{linenomath}
\begin{equation*}  d_{G_i}(v,Z) \geq (1/2+\alpha/2)|Z|
\quad\text{and}\quad
 d_{G_i}(v, V\setminus Z) \geq (1/2+\alpha/2)|V\setminus Z|.\end{equation*} %\end{linenomath}
\end{prop}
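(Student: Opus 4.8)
The plan is to select $Z$ uniformly at random among all subsets of $V$ of size $\beta n$ and show that with positive probability it has the desired property for all $i \in [n]$ and all $v \in V$ simultaneously. Fix $i \in [n]$ and $v \in V$. The quantity $d_{G_i}(v,Z) = |N_{G_i}(v) \cap Z|$ is a hypergeometric random variable: we are sampling $\beta n$ elements from $V$ without replacement and counting how many land in $N_{G_i}(v)$, a set of size $d := d_{G_i}(v) \geq (1/2+\alpha)n$. Hence its mean is $\mu := d \cdot \beta n / n = \beta d \geq (1/2+\alpha)\beta n = (1/2+\alpha)|Z|$. Applying Lemma~\ref{lem:chernoff} with a suitably small constant $\eps = \eps(\alpha)$ (something like $\eps = \alpha/(2(1/2+\alpha)) \le \alpha$, so that $(1-\eps)\mu \geq (1/2+\alpha/2)|Z|$), we get
\[
\mb P\big(d_{G_i}(v,Z) < (1/2+\alpha/2)|Z|\big) \le \mb P(|d_{G_i}(v,Z) - \mu| \ge \eps \mu) \le 2e^{-\eps^2 \mu/3} \le 2e^{-\eps^2\beta n/6},
\]
which is $o(n^{-2})$ as $n \to \infty$ since $\eps, \beta$ are constants.

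For the second inequality, note that $|N_{G_i}(v) \cap (V \setminus Z)| = d_{G_i}(v) - d_{G_i}(v,Z)$ and $|V\setminus Z| = (1-\beta)n$. This is again hypergeometric (sampling the complementary set of size $(1-\beta)n$), with mean $d \cdot (1-\beta) \geq (1/2+\alpha)(1-\beta)n = (1/2+\alpha)|V\setminus Z|$, so the identical Chernoff argument gives that the probability this falls below $(1/2+\alpha/2)|V\setminus Z|$ is also $o(n^{-2})$. Taking a union bound over all $n$ choices of $i$, all $n$ choices of $v$, and the two inequalities, the probability that $Z$ fails to satisfy the conclusion is at most $2n^2 \cdot o(n^{-2}) < 1$ for $n \geq n_0$ large enough. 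Hence a valid $Z$ exists.

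I do not expect any serious obstacle here; the only mild subtlety is making sure the hypergeometric setup is applied correctly (confirming that both $d_{G_i}(v,Z)$ and its complement are genuinely hypergeometric, which the remark before Lemma~\ref{lem:chernoff} licenses us to feed into the Chernoff bound) and choosing $\eps$ in terms of $\alpha$ so that $(1-\eps)\mu$ clears the required threshold $(1/2+\alpha/2)|Z|$ — this just needs $\eps \le \frac{\alpha/2}{1/2+\alpha}$, which holds for $\eps = \alpha/(2+2\alpha)$ and certainly for any smaller constant. One could alternatively phrase this deterministically via a greedy/probabilistic-deletion argument, but the one-shot random choice with a union bound is cleanest.
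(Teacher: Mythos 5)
Your proof is correct and follows essentially the same route as the paper: a uniformly random $Z$ of size $\beta n$, the observation that $d_{G_i}(v,Z)$ and $d_{G_i}(v,V\setminus Z)$ are hypergeometric, a Chernoff bound, and a union bound over all $O(n^2)$ pairs $(i,v)$. The only cosmetic difference is that the paper allows a $\pm 1$ slack in the expectation, which is in fact unnecessary (as your exact computation $\mathbb{E}[d_{G_i}(v,Z)]=\beta\, d_{G_i}(v)$ shows, since $v\notin N_{G_i}(v)$).
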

\begin{proof}
Let $Z$ be a uniformly random chosen subset of $V$ of size $\beta n$. Let $Z^i_v$ be the degree of $v$ into $Z$ in $G_i$ and $X^i_v$ be the degree of $v$ into $V \setminus Z$ in $G_i$. Note that $\mathbb{E}(Z^i_v)=\beta d_{G_i}(v) \pm 1 \geq (1/2+\alpha)\beta n-1$ and $\mathbb{E}(X^i_v)=(1-\beta) d_{G_i}(v) \pm 1 \geq (1/2+\alpha)(1-\beta) n -1$ for every $v \in V$ and $i \in [n]$.\footnote{$\pm 1$ coming from whether $v \in Z$ or $v \in V\setminus Z$ in each case.} Furthermore, both $Z_v^i$ and $X_v^i$ are hypergeometric. 
Thus, via Lemma~\ref{lem:chernoff} (Chernoff bounds), we have with probability less than $\exp(-\sqrt{n})$ that $Z^i_v \leq (1/2+\alpha/2)\beta n$ and that $X^i_v \leq (1/2+\alpha/2)(1-\beta) n$. Then taking a union bound over $Z^i_v$ and $X^i_v$ for every $v \in V$ and every $i \in [n]$ yields the existence of $Z$.
\end{proof}

Throughout the proof of our main result, we will use the following proposition multiple times to join  some exactly $\chi_1$-coloured path $P_1$ and another exactly $\chi_2$-coloured path $P_2$ via a cherry (a path of length two) with middle vertex $z$ and edges in colours $c_1$ and $c_2$. (With slight abuse of notation, this produces an exactly  $\chi_1 c_1c_2\chi_2$-coloured path $P_1zP_2$.) We refer to such connective steps as \emph{cherry connections}.
\begin{prop}\label{prop:cherries}
Let $0 < 1/n \ll \alpha$.
Let $G_1,G_2$ be two graphs on a common vertex set $V=[n]$ and  let $Z\subseteq V$ such that for every $v\in V$, $ d_{G_1}(v,Z)\ge (1/2+\alpha/2)|Z|$ and $ d_{G_2}(v, Z)\ge (1/2+\alpha/2)|Z|$. Then for every pair of vertices $x, y \in V$ and every $U \subseteq V$ such that $|U\cap Z| <\alpha|Z|$, there exists $z \in Z \setminus (U \cup \{x,y\})$ such that $xzy$ is a cherry with $xz \in G_1$ and $yz \in G_2$. 
\end{prop}

\begin{proof}
    By inclusion-exclusion, $|N_{G_1}(x,Z)\cap N_{G_2}(y,Z)|\ge \alpha |Z|$ so $|(N_{G_1}(x,Z)\cap N_{G_2}(y,Z))\sm U|>0$, i.e.~there is a vertex $z$ in the common neighbourhood avoiding $U$. Then $xzy$ is the required cherry.
\end{proof}

\section{Main lemmas and proof overview}
\label{sec:proof overview}

In this section we discuss the main ideas of our proof and state our two main lemmas, namely Lemmas \ref{lemma:abspath} and \ref{lemma:pathcoll},  which together imply our main result Theorem \ref{thm:main}.

First though, we briefly discuss  the previous approaches to transversal problems for spanning structures and why they cannot be adapted to our setting. Indeed, the approach of \cite{mmp} which is also adopted in \cite{CIKL,chakraborti2023hamilton,ghmps} uses a colour absorption strategy finding fixed subgraphs that have flexibility in the colours that can be used on them. Similarly the absorbers used in \cite{Cheng2,chwwy,Cheng1} provide flexibility for which vertices and which colours are used to contribute to some fixed part of the spanning structure but this flexibility results in a loss of control over which  colour patterns appear on  the subgraphs of the spanning structure. The approach of Joos and Kim \cite{jooskim} follows a `rotation-extension' approach as in the proof of Dirac's theorem, which destroys any fixed colour pattern of a subpath in the process of extending the path or closing it to a cycle. Finally, the regularity-based method of \cite{staden_rainbow} views the problem of finding a transversal structure in a graph as an embedding problem in $3$-uniform hypergraphs by adding the set of colours as new vertices and a $3$-uniform edge for each original edge of the collection with its corresponding colour. When embedding structures in this hypergraph, one loses the `labelling' of the colours and hence the ability to find specific colour patterns. 

The one exception in previous approaches to transversal problems being able to be strengthened to the case of finding all colour patterns is  the proof of Montgomery, M\"uyesser and Pehova~\cite{mmp} in the  case of clique factors. In short, the idea is that the configuration of $\binom{r}{2}$ colours in any copy of $K_r$ in a coloured $K_r$-factor can be viewed as one ``compound colour'', reducing the colour pattern problem to a classical transversal problem on the compound colours.
However, this approach does not apply when the sought spanning structure has $o(n)$ components. 

In our case, it is clear that a new strategy is necessary. Similarly to some of the previous approaches \cite{CIKL,chakraborti2023hamilton,Cheng2,chwwy,Cheng1,ghmps,mmp}, we use the method of absorption but our absorbing structure needs to provide flexibility \emph{only for vertices}. That is, the colours that will be used on the part of the spanning structure given by the absorption will be fixed beforehand and our absorption strategy will produce a fixed subpath of our edge-coloured Hamilton cycle. In detail, we will obtain the following.

\begin{lemma}[Absorbing path] \label{lemma:abspath}
Let $0<1/n \ll \gamma \ll \beta \ll \alpha \leq 1$. Then there exists  $a\le 500\beta n$ such that the following holds. Let   $t:=a+\beta n+1$ and let $\mathbf{G}=(G_1, \ldots, G_t)$  be a graph collection on a common vertex set $V=[n]$ with $\delta(\mathbf{G})\geq (1/2+\alpha)n$. Let $Z \subseteq V$ be a vertex set of size $(\beta +\gamma )n+2$ and $z_1,z_2\in Z$ be a pair of vertices. Then there exists an \emph{absorbing set} $A \subseteq V \setminus Z$ of size $a$ such that for every $Z' \subseteq Z\setminus \{z_1, z_2\}$ of size $\beta n$, there is a $z_1,z_2$-path $P_{Z'}=e_1 \ldots e_t$ whose internal vertices cover $A \cup Z'$ exactly and $e_i\in G_i$ for all $i\in[t]$.
\end{lemma}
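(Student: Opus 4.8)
The plan is to build the absorbing path from a collection of small \emph{absorbing gadgets}, glued together along a template graph, following the Montgomery \cite{mont} robustly-matchable bipartite graph paradigm, but with the twist that each gadget must realise a \emph{fixed} colour pattern regardless of which vertex it ends up absorbing. First I would define the basic gadget: for a target vertex set, I want a short path on a constant number of vertices, with a prescribed colouring on its edges, such that for any one of a constant-size list of ``flexible'' vertices $w$, the gadget has a realisation as an exactly-$\chi$-coloured path through exactly the gadget's own vertices together with $w$, and also a realisation \emph{not} using $w$ (so that $w$ can instead be placed elsewhere). Since consecutive edges of the path only need to lie in the two relevant colour classes $G_i$, and every $G_i$ has min-degree $(1/2+\alpha)n$, Proposition~\ref{prop:cherries} (applied with the set $Z$ there being essentially all of $V$ minus a bounded set of already-used vertices) lets me route a cherry through any prescribed vertex between any two endpoints. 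Chaining a constant number of such cherry connections, I can realise a path of constant length with \emph{any} prescribed colour pattern through \emph{any} prescribed sequence of internal vertices, as long as the total number of forbidden vertices stays $o(n)$. This ``constant-length paths are colour-blind and vertex-prescribable'' fact is the workhorse, and it is essentially immediate from Proposition~\ref{prop:cherries}.

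Next I would set up the template. Using Montgomery's robustly matchable bipartite graph $B$ on parts of size $\Theta(\beta n)$ with maximum degree a constant, I assign to each vertex of one part of $B$ a block of $Z$-vertices and to each vertex of the other part a gadget built from fresh $A$-vertices (disjoint from $Z$ and from each other), and I wire the gadgets into one long path $e_1\ldots e_t$ using the colours $1,\ldots,t$ in order: the colours on each gadget and each connecting cherry are fixed once and for all when the path skeleton is laid down. The robust matching property of $B$ guarantees that for any subset $Z'\subseteq Z\setminus\{z_1,z_2\}$ of the right size, there is a perfect matching of $B$ between the $Z'$-blocks and a matching set of gadgets; then each matched gadget absorbs its assigned $Z'$-vertex via its ``flexible'' realisation, and each unmatched gadget uses its ``empty'' realisation. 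Because every gadget realisation spans exactly the gadget's own vertex set plus (possibly) one $Z'$-vertex, and the colours are fixed, the resulting path $P_{Z'}=e_1\ldots e_t$ has the same colour pattern for every $Z'$, covers $A\cup Z'$ exactly internally, and runs from $z_1$ to $z_2$ (routing the two ends of the skeleton to $z_1$ and $z_2$ by two more cherry connections). A short computation fixes $a$: with a bounded gadget size and $\Theta(\beta n)$ gadgets one gets $a\le 500\beta n$ with room to spare, and $t=a+\beta n+1$ is forced by the path having $a+\beta n$ internal vertices.

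The main obstacle — and the only genuinely new point over the standard absorption setup — is constructing a gadget whose colouring is \emph{rigid} but whose vertex content is \emph{flexible}. In the usual rainbow setting one has spare colours to re-route, but here the colours on the absorbing segment are pinned down in advance, so the two realisations of a gadget (with and without the flexible vertex) must use \emph{the same multiset of colours in the same order along the path}. I would resolve this by taking the gadget to be a path on $2k{+}1$ vertices for a suitable constant $k$, of the form $x_1 a_1 x_2 a_2 \cdots$ where the $a_j$ are the gadget's own vertices and the $x_j$ positions are ``switchable'': in the empty realisation the $x_j$ are filled by the gadget's remaining own vertices, in the absorbing realisation one $x_j$ slot is instead occupied by the flexible vertex $w$ and the displaced own-vertex is reinserted downstream — both realisations being produced by the same sequence of cherry connections with the same fixed colours, differing only in which vertices the cherries pass through, which Proposition~\ref{prop:cherries} permits. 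Verifying that such a gadget exists and can simultaneously handle each vertex in a constant-size candidate list (needed so that the template matching has choices) is the delicate part; everything else — the Chernoff-based choice of the ambient vertex reservoir so that the ``$|U\cap Z|<\alpha|Z|$'' hypothesis of Proposition~\ref{prop:cherries} holds throughout, the union bound over the bounded number of connection steps, and the bookkeeping of used vertices — is routine.
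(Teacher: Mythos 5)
Your overall architecture—robustly matchable bipartite graph as template, absorbing gadgets glued by cherry connections via Proposition~\ref{prop:cherries}, colours allotted in order along the spine, and Chernoff/bookkeeping to maintain degree conditions—matches the paper's proof closely. The genuine gap is in the gadget itself, which you correctly flag as the delicate new point but do not actually resolve.

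Your gadget is a constant-length path with an ``empty realisation'' using only its own vertices and an ``absorbing realisation'' that additionally passes through an external flexible vertex $w$, so that ``each matched gadget absorbs its assigned $Z'$-vertex\dots and each unmatched gadget uses its empty realisation.'' This cannot work as stated: if the empty realisation has the gadget's own $2k{+}1$ vertices and the absorbing realisation also covers all of them plus $w$, the two realisations have different lengths, which shifts every colour index downstream and destroys the fixed colour pattern you need. Your fix of ``reinserting the displaced own-vertex downstream'' does not help—either the path gets longer (same problem), or the displacement cascades to another own-vertex and eventually one own-vertex is left uncovered, which violates the requirement that the internal vertices of $P_{Z'}$ cover $A$ \emph{exactly}. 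In a rigid-colour, rigid-length setting you cannot have a gadget that sometimes absorbs one vertex and sometimes absorbs none.

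The paper's gadget $F_\ell$ (Figure~\ref{fig:absorbing-gadget}) sidesteps this by \emph{always} absorbing exactly one vertex, chosen from a set of $\ell{+}1$ candidate ``flexible'' vertices $A=\{a_1,\dots,a_{\ell+1}\}$: for every $i$, there is a $b_0,b_{3\ell+1}$-path through $B\cup C\cup\{a_i\}$ with the fixed colour pattern, using parallel edges of the same colour (e.g.\ $a_{i+1}b_{3i}$ and $c_ib_{3i}$, both colour $4i{+}1$) to let $a_i$ and $c_i$ swap roles depending on $i$. Correspondingly, the template is used differently from what you describe: the matching in $B$ is always a \emph{perfect} matching of $X$ into $Y\cup Z'$ (not a matching of only some gadgets), so every gadget absorbs one vertex, and those absorbed vertices are exactly $Y\cup Z'$ where $Y\subseteq A$ is a ``buffer'' set placed into the absorber precisely so that a perfect matching exists for every choice of $Z'$. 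The unabsorbed vertices are the elements of $Z\setminus(Z'\cup\{z_1,z_2\})$, which simply do not appear on the path—they are never part of $A$, so nothing is lost. You would also need to observe, as the paper does, that $F_\ell$ has degeneracy $2$, so that it can be embedded robustly (Lemma~\ref{lemma:degenk}) with the prescribed colouring; mere existence of colour-faithful cherries is not quite enough to plant a gadget with internal cycles, since your ``constant-length paths are colour-blind'' workhorse only directly handles paths.
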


The proof of Lemma \ref{lemma:abspath} is given in Section \ref{sec:absorption}. We build $A$ as a union of many constant-size absorbing gadgets as shown in Figure~\ref{fig:absorbing-gadget} (see page \pageref{fig:absorbing-gadget}). Each gadget will have vertex set $L\cup R$ with two distinct special vertices $x,y\in R$ and will have the key property that for \emph{any} $v\in L$  there is a rainbow path (with a fixed rainbow colour pattern) from $x$ to $y$ using the vertices of $R$ and the vertex $v$. This gadget thus provides flexibility: it will always contribute the same part of the rainbow Hamilton cycle that we desire, but we can choose which vertex in $L$ contributes to this. Moreover, the absorbing gadget is carefully defined so that it has degeneracy two and so can be found (in a robust way) in our graph collection (Lemma \ref{lemma:absgag}).
Lemma \ref{lemma:abspath} is then achieved by building the appropriate absorbing gadgets according to some \emph{template} given by a robustly matchable bipartite graph. 
This approach was pioneered by Montgomery \cite{mont} and allows one to transfer the local flexibility given by the absorbing gadgets to a more global flexibility given by Lemma \ref{lemma:abspath}, which allows us to choose any $\beta n$-sized subset $Z'$ of a larger set $Z$  to contribute to  our fixed rainbow path. 

As is usual with the absorption approach, after putting aside the absorbing structure given by Lemma \ref{lemma:abspath} on vertex set $A\cup Z$, we then need to find an \emph{almost spanning} structure on the vertices $V\setminus (A\cup Z)$. We will then use some of the vertices of $Z$ to extend the almost spanning structure and connect the sections of our Hamilton cycle, all the time using the colours dictated by our colour pattern. 
We will do this in a way that leaves a fixed number of vertices of $Z$ uncovered so that the key property of Lemma \ref{lemma:abspath} will complete the desired Hamilton cycle with the parts that we have already found. 

In order to extend and connect through the vertex set $Z$  we choose $Z$ randomly so that all vertices have high degree (in all colours) to $Z$. Doing this allows us to make many connections through $Z$ (in a disjoint fashion) using Proposition~\ref{prop:cherries} and so instead of our almost spanning structure being a connected component of our Hamilton cycle, we can afford to chop it up into linearly many paths, each of which has some large constant $K$ many vertices. This is useful  for us, as it allows us to find these segments of the Hamilton cycle independently without worrying about connecting them until we use the vertices in $Z$. We will find these subpaths by taking a random partition of $V\setminus (A\cup Z)$ and applying our second main lemma which we now state.

\begin{lemma}[Path collection] \label{lemma:pathcoll} 
Let $0<1/n \ll \eps,1/K  \ll \alpha \leq 1$ and let $\mathbf{G} = (G_1, \ldots, G_n)$ be a collection of graphs on a common vertex set $V=[n]$ which are all $K$-partite on the balanced vertex partition $V_1, \ldots, V_K$, such that $\delta(\mathbf{G}[V_j, V_{j+1}]) \geq (1/2 + \alpha)n/K$ for all $j \in [K-1]$. Let $s\le(1-\eps)n/K $ and for each $i \in [s]$ let $\chi_i: [K-1] \rightarrow [n]$. Then there exist vertex-disjoint paths $P_1, \ldots, P_s$ each on $K$ vertices such that $P_i$ is exactly $\chi_i$-coloured for every $i\in[s]$.
\end{lemma}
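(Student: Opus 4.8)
The plan is to build the paths one at a time, greedily, exposing the vertices of the desired path $P_i = u_1^i u_2^i \cdots u_K^i$ with $u_j^i \in V_j$, and to maintain throughout the process enough room in each part $V_j$ so that the relevant minimum degree condition survives the deletion of the already-used vertices. Since $s \le (1-\eps)n/K$, at any point of the process at most $(1-\eps)n/K$ vertices have been used from each part, so at least $\eps n/K$ vertices of each $V_j$ remain available; call the set of unused vertices of $V_j$ at a given stage $W_j$. For the colour pattern $\chi_i$, we want an edge of colour $\chi_i(j)$ between the chosen vertex in $W_j$ and the chosen vertex in $W_{j+1}$, for each $j \in [K-1]$.

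The key step is to find, for a single fixed pattern $\chi = \chi_i$, one path of length $K-1$ inside the current available sets $W_1, \ldots, W_K$. I would do this by a sequence of $K-1$ matchings, processed from $j=1$ to $j=K-1$, but to keep the construction clean it is cleanest to use Lemma~\ref{lem:hall} iteratively: for a suitably chosen subset $W_1' \subseteq W_1$ of size $t := \eps n / (2K)$ (say) and a subset $W_2' \subseteq W_2$ of the same size, the bipartite graph $G_{\chi(1)}[W_1', W_2']$ has minimum degree at least $\delta(\mathbf{G}[V_1,V_2]) - (|V_1| - t) \ge (1/2+\alpha)n/K - (n/K - t) \ge t/2$, since $|V_j| = n/K$ and $t \le \alpha n/K$; hence it contains a perfect matching $M_1$. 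This matching pairs up $W_1'$ with $W_2'$. Now restrict attention to $W_2''$, the endpoints in $W_2'$ of $M_1$ — wait, these are all of $W_2'$ — and pass to a subset $W_2' \subseteq W_2$ that we match down into $W_3$: again $G_{\chi(2)}[W_2', W_3']$ has a perfect matching. Composing these matchings along $j = 1, 2, \ldots, K-1$ yields many vertex-disjoint copies of an exactly-$\chi$-coloured path on $K$ vertices; pick any one of them, say $P_i$, remove its $K$ vertices from the $W_j$'s, and proceed to pattern $\chi_{i+1}$.

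The one point needing care is that the matchings along different coordinates must be \emph{composed consistently}: after finding $M_1 \subseteq G_{\chi(1)}[W_1', W_2']$, when we look for $M_2$ we must match a subset of the \emph{right endpoints of $M_1$} forward into $W_3$, and so on, so that the paths actually chain up. The clean way to handle this is to not insist on a perfect matching at each stage but to track, after step $j$, a set $S_{j+1} \subseteq W_{j+1}$ of vertices each of which is the right endpoint of a partial path $u_1 \cdots u_{j+1}$ already built with the correct colours on its first $j$ edges; one then only needs enough of $S_{j+1}$ to have forward-degree into $W_{j+2}$ in colour $\chi(j+1)$, which again follows from the degree bound since $|S_{j+1}| \ge t$ can be arranged as long as $t \le \alpha n/K$ and $t \le \eps n/(2K)$. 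At the very end we have at least one full path; in fact we can afford to be wasteful and only ever ask for a single path per pattern.

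**Main obstacle.** The only real subtlety is bookkeeping: ensuring that the quantity $t$ of "live" vertices one carries forward through the $K-1$ rounds is simultaneously (a) large enough that the min-degree of each induced bipartite graph, after deleting the $\le (1-\eps)n/K$ globally-used vertices \emph{and} the vertices not in the current live set, still exceeds half the relevant part-size — this needs $t \le \alpha n/K$ roughly — and (b) at least $1$ at the end so that a path is produced. Since $1/K \ll \alpha$ and $1/n \ll \eps, 1/K$, there is ample slack: taking $t = \eps n/(2K)$ works, as $\eps/(2K) \le \alpha$ fails in general — so instead one should take $t = \min(\eps, \alpha) n/(2K)$, or simply observe that one only needs $t \ge 1$ at the end and $t \le \alpha n / K$ throughout, and $K-1$ rounds of halving starting from $t_0 = \alpha n/K$ still leaves $t_{K-1} \ge \alpha n / (K 2^{K}) \gg 1$ since $n$ is large relative to $K$. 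I would present it with $t_0 := \alpha n/(2K)$ and track $t_{j} \ge t_0 / 2^{j}$, noting $t_{K-1} \ge 1$ because $1/n \ll 1/K, \alpha$. Everything else is a direct application of Lemma~\ref{lem:hall}.
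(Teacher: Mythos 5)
Your proposal has a genuine gap, and it is not a bookkeeping issue but a fundamental one: a deterministic greedy choice of paths does not preserve the minimum degree condition, and the inequality you use to claim otherwise is false.

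Concretely, you assert that for $|W_1'|=|W_2'|=t$ the graph $G_{\chi(1)}[W_1', W_2']$ has minimum degree at least
$(1/2+\alpha)n/K - (n/K - t) = t - (1/2 - \alpha)n/K$,
and that this is at least $t/2$ because $t \le \alpha n/K$. But $t - (1/2-\alpha)n/K \ge t/2$ is equivalent to $t \ge (1-2\alpha)n/K$, which is the \emph{opposite} of $t \le \alpha n/K$ when $\alpha < 1/3$. With $t=\eps n/(2K)$ and $\alpha,\eps$ small, the bound $t-(1/2-\alpha)n/K$ is in fact \emph{negative}, i.e.\ vacuous. And this is not an artefact of the estimate: after $(1-\eps)n/K$ vertices have been deleted from $V_{j+1}$, a surviving vertex $v\in V_j$ may well have \emph{zero} remaining neighbours in $W_{j+1}$ in colour $c$, since the deleted set could coincide with $N_{G_c}(v,V_{j+1})$. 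No choice of the live subset $W_{j+1}'\subseteq W_{j+1}$ can rescue such a $v$, and once many vertices in $W_j$ are isolated into $W_{j+1}$ the matching step fails. So the greedy algorithm as you describe it can get stuck well before $s$ paths are built, and the tracking of a ``live'' set $S_{j+1}$ of size $t_j \ge t_0/2^j$ does not address this: the problem is not the length-$K$ chaining within one round, but the cumulative effect of the deletions across rounds.

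This is exactly the difficulty the paper's proof is built to overcome. The paper runs the same kind of round-by-round algorithm, but at each round it builds the full family of $n_i$ vertex-disjoint coloured paths (via perfect matchings, which exist by Lemma~\ref{lem:hall} because the induced minimum degree is still above half at that moment), and then deletes a \emph{uniformly random} one of them. The random choice makes the set of deleted vertices in each part (over a block of $\gamma n$ rounds) a uniform random subset, so every vertex's remaining degree concentrates near its expected proportional value; a nibble-style union bound then shows the minimum degree ratio degrades only by a controlled factor $(1-\delta)$ per block and stays above $1/2$ for all $s\le(1-\eps)n/K$ rounds. Without this randomisation (or some other mechanism forcing the deletions to be quasirandom), the minimum degree condition you invoke in each round is simply not available.
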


The proof of Lemma \ref{lemma:pathcoll} is given in Section \ref{sec:path cover}. In order to find the paths, we run a simple random process which removes randomly chosen paths one at a time.  In more detail, we will use that, given that the degree of each vertex (in each colour) to a neighbouring part is at least half the size of the parts at a given time, Lemma \ref{lem:hall} guarantees the existence of a matching in each colour between parts. Choosing the coloured matchings appropriately, we thus get a factor of rainbow paths, each with the rainbow colour pattern we desire. We choose one of these paths uniformly at random and move to the next step of the process. In order for this to work we need to show that with  high probability the minimum degree condition between the vertex sets is maintained. Intuitively this should be the case as within one part $V_i$ the set we have removed so far in the algorithm is a uniformly random set and so we expect the degree of a vertex to the remainder to be proportional to its original degree in $V_i$. We show this property is maintained with high probability using a `R\"odl nibble'  type analysis \cite{rodl1985packing}, treating several rounds of the algorithm together in small bites. 

\section{Proof of main result} \label{sec:mainproof}

Here we put together Lemma \ref{lemma:abspath} (Absorbing path) and Lemma \ref{lemma:pathcoll} (Path collection) to prove Theorem~\ref{thm:main} which, as discussed in the introduction, also implies the more general Theorem \ref{thm:general}.

\begin{proof}[Proof of Theorem \ref{thm:main}] Given $\alpha>0$ we let $0<1/n\ll \eps,1/K \ll \gamma \ll \beta \ll \alpha$. 

We start by finding $Z\subseteq V$ of size $(\beta + \gamma)n+2$, as per Proposition \ref{prop:Z}. We have that for every $G_i$ and $v \in V$, $ d_{G_i}(v,Z) \geq (1/2+\alpha/2)|Z|$ and $ d_{G_i}(v,V\setminus Z) \geq (1/2+\alpha/2)|V\setminus Z|$. Arbitrarily choose $\{z_1, z_2\} \subseteq Z$. By Lemma \ref{lemma:abspath} there exists a set $A \subseteq V \setminus Z$ of size $a \leq 500\beta n$ such that for every subset $Z' \subseteq Z\setminus\{z_1,z_2\}$ of size $\beta n$, there exists a $z_1,z_2$-path $P_{Z'}=e_1e_2\ldots e_t$ of length $t:=a+\beta n +1$ such that $e_i \in G_i$ for every $i \in [t]$ and the internal vertices of $P_{Z'}$ cover $A\cup Z'$ exactly.

In order to find the required Hamilton cycle $C$, it remains to find a $z_2, z_1$-path $P_L=e_{t+1}e_{t+2}\ldots e_n$ of length $n-t$ in $V\setminus A$  which covers $V\setminus (A \cup Z)$, leaves exactly $\beta n$ vertices in $Z$, and has  $e_i \in G_i$ for every $i \in \{t+1, \ldots, n\}$. Let $V':=V\setminus (A \cup Z)$, $n':=|V'|$, and $\mathbf{G'}:=(G'_1, \ldots, G'_n)$ where $G'_i:=G_i[V']$ for every $i \in [n]$. Note that by definition we have $n'=n-t-\gamma n-1$. Since $\beta, \gamma \ll \alpha$, we have that $\delta(\mathbf{G'}) \geq (1/2+\alpha/2)n$. Consider a random partition of $V'$ into $K$ sets $(V_1, \ldots, V_K)$ of size $\lfloor{n'/K}\rfloor$ and one set, $V_0$, of size $n'-K\lfloor{n'/K}\rfloor \leq K$. By Lemma~\ref{lem:chernoff} (Chernoff bounds) 
and taking a union bound it follows that there exists such a partition with the property that for every $j \in [K-1]$, $\delta(\mathbf{G'}[V_j, V_{j+1}]) \geq (1/2 + \alpha/4)\lfloor{n'/K}\rfloor$.

Fixing such a partition, we have that, by Lemma \ref{lemma:pathcoll}, for $s :=\lfloor (1-\eps)\lfloor n'/K\rfloor \rfloor$ and any maps $\chi_1, \ldots, \chi_s$ with every $\chi_i:[K-1] \rightarrow [n]$, there exist vertex-disjoint paths $P_1, \ldots, P_s$ such that $P_i$ is exactly $\chi_i$-coloured. For each $j\in [K-1]$ and $i\in [s]$ we take %$\chi_i(j)=t+i(K+2)-K+j$.
$\chi_i(j):=t+i(K+1)-K+1+j$.

It remains to make the following connections, with each new vertex or path added using only additional vertices in $(V' \cup Z)\setminus\{z_1, z_2\}$ such that each path is internally disjoint from any previous paths:
\begin{enumerate}[(i)]
\item connect $z_2$ to $v_{P_1}^-$ via a cherry in colours $t+1$ and $t+2$ in order, 
\item for each $i \in [s-1]$, connect $v_{P_i}^+$ to $v_{P_{i+1}}^-$ via a cherry in colours $\chi_i(K-1)+1$ and $\chi_i(K-1)+2=\chi_{i+1}(1)-1$ in order, and
\item find a path starting at $v_{P_s}^+$ and ending at $z_1$ that covers any remaining vertices in $V'$ (including those in $V_0$), of length exactly $n-\chi_s(K-1)$ in colours $\chi_s(K-1)+1$ up to $n$ in order.
\end{enumerate}
Note that  there are at most $n'/K \ll \gamma n$ cherry connections as in~(i) and~(ii) to make. Thus by Proposition \ref{prop:cherries} applied with the set of vertices already used in a path segment playing the role of $U$, these connections can be made greedily through $Z$. Furthermore, there are $c:= n'-sK\leq K\left(\eps \lfloor n'/K\rfloor+1\right) \ll \gamma n$ vertices left to cover in $V'$. 
Giving them an arbitrary ordering $v_1, \ldots, v_c$ we greedily make cherry connections via unused vertices in $Z\sm \{z_1,z_2\}$ in successive colours starting from $\chi_s(K-1)+1$ between $v_{P_s}^+$ and $v_1$, and between $v_i$ and $v_{i+1}$ for every $i \in [c-1]$ as per Proposition \ref{prop:cherries}. 
Note that up to this point we have a $z_2,v_c$-path whose internal vertices are in $V' \cup Z$ and cover $V'$ and far fewer than $\gamma n$ vertices of $Z\sm \{z_1,z_2\}$, meaning that in particular more than $\beta n$ vertices remain unused in $Z$. To complete step~(iii) it remains to find a path from $v_c$ to $z_1$ through unused vertices in $Z\sm \{z_1,z_2\}$ leaving exactly $\beta n$ vertices of $Z\sm \{z_1,z_2\}$ uncovered, and using all colours up to $n$ exactly in order. 

To do this, starting from $v_c$, we will first pick unused neighbours in $Z \setminus \{z_1, z_2\}$ in successive colours greedily until exactly $\beta n+1$ vertices of $Z\sm \{z_1,z_2\}$ remain uncovered by our path. This is possible via our application of Proposition~\ref{prop:Z}. 
%Recall that $|Z|=(\beta+\gamma)n+2$. Thus, combined with the fact that to complete steps~(i)--(ii), we used much fewer than $\gamma n$ vertices of $Z$, we have the required space left in $Z$ to complete step~(iii) leaving exactly $\beta n$ vertices of $Z \setminus \{z_1, z_2\}$ uncovered, as required. In particular, by our application of Proposition~\ref{prop:Z} we are able to complete step~(iii) greedily. % as there are at most $\gamma n +1 \leq |Z|/2$ neighbours for any verte In particular we have $\Theta(\gamma n) \ll \beta n$ have already been covered, leaving at least $(1-o(1))\gamma n\gg 1$ for the length of this greedy process. On the other hand we want to leave exactly $\beta n+1$ vertices of $Z\sm\{z_1,z_2\}$ uncovered, so at most $\gamma n - 1<|Z|/2$ vertices of $Z\sm \{z_1,z_2\}$ will be covered at any given time.
%By our application of  Proposition \ref{prop:Z}, we have that $ d_{G_i}(v,Z)\geq (1/2+\alpha/2)|Z|$ for all $i\in [n]$ and all $v\in V$, so the greedy process will produce the required path. 
Finally, to complete step~(iii), by Proposition \ref{prop:cherries}, we can complete the path to $z_1$ with a cherry in the next two colours in the ordering, again via an unused vertex in $Z\sm\{z_1,z_2\}$, leaving a set $Z'\subseteq Z\sm\{z_1,z_2\}$ of exactly $\beta n$ vertices uncovered. 

By construction, the $z_2,z_1$-path produced in steps (i)--(iii) uses consecutive colours starting at $t+1$. Its internal vertices are exactly $V'$ together with a $\gamma n$-sized subset of $Z\sm\{z_1,z_2\}$. From this, its length is exactly $|V'|+\gamma n+1=n-t$, meaning that the last edge has colour $n$.

Finally, taking the path $P_{Z'}$ guaranteed by the absorbing property of $A$ given by Lemma~\ref{lemma:abspath}  completes the desired Hamilton cycle.
\end{proof}

\section{Absorption} \label{sec:absorption}

We use the increasingly popular absorption strategy due to Montgomery~\cite{mont}, sometimes referred to as \emph{distributive absorption} or the \emph{absorption template method}, which was introduced as an absorbing structure for spanning trees. This strategy relies on an auxiliary template known as a \emph{robustly matchable bipartite graph}. 
This allows us to create few disjoint absorbing gadgets relative to all possible combinations of sets which might need to be absorbed.  

\begin{lemma}\label{lemma:rmbg}
Let $0 < 1/m \ll \beta$.
Then there exists a bipartite graph $B$ with vertex classes $X_B$ and $Y_B \cup Z_B$ such that $|X_B|=3m$, $|Y_B|=2m$, $|Z_B|=m+ \beta m$, $2 \leq \dD(B) \leq \Delta(B) \leq 40$, and the following holds. For every subset $Z_B'\subseteq Z_B$ with $|Z_B'|=m$, there exists a perfect matching in $B[X_B, Y_B \cup Z_B']$.
\end{lemma}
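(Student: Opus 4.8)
The plan is to construct $B$ explicitly as a union of short cycles, following Montgomery's original approach to robustly matchable bipartite graphs (and the Nenadov--Pehova variant), then verify the matching property and degree bounds by hand. First I would build an auxiliary bipartite graph $H$ on vertex classes $Y$ (size $2m$) and $Z$ (size $m+\beta m$): take a collection of vertex-disjoint cycles, or rather a single cyclic-type structure, arranged so that every vertex of $Y$ has degree $2$ and every vertex of $Z$ has degree $2$, with the crucial \emph{robust matching} property that for every $Z' \subseteq Z$ with $|Z'| = |Z| - |Y| = \beta m$ (wait --- here we need $|Z \setminus Z'| = |Y|$, so $|Z'| = \beta m$, consistent with the statement), the graph $H[Y, Z \setminus Z']$ has a perfect matching. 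The standard trick is to make $H$ a disjoint union of even cycles, or to use the bipartite graph where $Z$-vertices are the edges of a Hamilton-connected-ish graph on $Y$; Montgomery's construction achieves this with a random bipartite (multi)graph of bounded degree, using an expansion/Hall's-condition argument to show that deleting any $\beta m$ vertices from the larger side still leaves a perfect matching on the rest. I would cite or reprove that $H$ exists with $\dD(H) = \Delta(H) = 2$ on $Y$ and bounded degree on $Z$.

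Next I would assemble $B$ from $H$ by adding the third class $X$ of size $3m$ and "blowing up" to force the lower degree bound $\dD(B) \geq 2$ and upper bound $\Delta(B) \leq 40$ while preserving the matching property. Concretely: we need a perfect matching in $B[X, Y \cup (Z\setminus Z')]$, and $|X| = 3m = |Y| + |Z \setminus Z'| = 2m + m$, so $B$ restricted to $X$ versus $Y\cup(Z\setminus Z')$ must be a bipartite graph with a perfect matching robustly. The clean way is: pair up $X$ with $Y \cup Z$ (which has size $3m + \beta m$, slightly larger) via a bounded-degree bipartite graph $B_0$ between $X$ and $Y \cup Z$ such that for every $Z' \subseteq Z$ with $|Z'| = \beta m$, $B_0[X, Y \cup (Z \setminus Z')]$ has a perfect matching. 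This is exactly a robustly matchable bipartite graph in the form Montgomery/Nenadov--Pehova state it, and I would invoke their existence proof, only additionally checking that the construction can be taken to have minimum degree at least $2$ (which is automatic if we, say, take the union of the construction with a perfect matching on $X$ to $Y$, or simply note the random construction has min degree concentrated around its bounded mean, so $\dD \geq 2$ holds with room to spare after a trivial adjustment) and maximum degree at most $40$ (again by the bounded-degree nature of the construction; the constant $40$ is generous).

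The key steps in order: (1) state/recall the existence of a bounded-degree bipartite graph $H$ on $(A, C)$ with $|C| = |A| + \beta|A|$ such that deleting any $\beta|A|$ vertices of $C$ leaves a graph with a perfect matching --- this is Montgomery's robustly matchable bipartite graph, whose proof uses a random bounded-degree bipartite graph and verifies Hall's condition for all such deletions via a union bound over subsets together with an expansion estimate; (2) identify $A = X$, $C = Y \cup Z$ with the required sizes $|X| = 3m$, $|Y \cup Z| = 3m + \beta m$, noting $\beta m = \beta \cdot |X|/3$, so apply the existence result with the parameter $\beta/3$ in place of $\beta$ and with $|X| = 3m$ vertices on the small side; (3) read off that for every $Z' \subseteq Z$ with $|Z'| = \beta m$ we get $|Y \cup (Z \setminus Z')| = 3m = |X|$ and a perfect matching in $B[X, Y\cup(Z\setminus Z')]$; (4) adjust, if necessary, to guarantee $\dD(B) \geq 2$ by adding a bounded-degree structure (e.g. a $2$-regular graph) supported on the part of $C$ that is never deleted, i.e. on $Y$, which does not affect any of the matchings since those already use all of $Y$; and (5) check $\Delta(B) \leq 40$ from the bounded-degree construction plus these adjustments.

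The main obstacle I expect is \textbf{arranging the minimum degree lower bound $\dD(B) \geq 2$ simultaneously with the robust matching property} --- in particular, the vertices of $Z$ that might get deleted cannot be given "free" extra edges to $X$ without either inflating the max degree or, more subtly, breaking the careful counting that makes Hall's condition hold after deletion. The resolution is that $Z$-vertices in Montgomery's construction already have degree at least $2$ by design (each corresponds to, roughly, an edge of an auxiliary graph and is thus incident to two $X$-vertices), so no adjustment is needed there; the only vertices at risk of low degree are possibly a few in $Y$ or $X$, and those can be fixed by adding a bounded-degree $2$-regular bipartite graph between $X$ and $Y$ (both of which appear in \emph{every} matched set), which cannot destroy any perfect matching of $B[X, Y \cup (Z \setminus Z')]$ because it only adds edges. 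So the real content is entirely in citing or reproving the Montgomery/Nenadov--Pehova construction with explicit bounded degree, and the degree cosmetics are routine; I would present it in that order, keeping the existence of $H$ as the black box and spending the proof on the identification and the degree bookkeeping.
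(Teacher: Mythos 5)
Your proposal follows the same route as the paper: cite the Nenadov--Pehova (or Montgomery) robustly matchable bipartite graph and then patch up the minimum-degree requirement afterwards, relying on the fact that the construction already has bounded maximum degree. The paper's proof of this lemma is just as short --- it invokes~\cite{np} and notes that $\dD(B)\ge 2$ ``can be obtained by adjoining to $B$ an arbitrary matching covering any degree-1 vertices in $Y\cup Z$.''

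One piece of your reasoning is off, however, and it makes the argument more convoluted than necessary. You worry that giving extra edges to vertices of $Z$ could break ``the careful counting that makes Hall's condition hold after deletion,'' and so you engineer your fix to avoid $Z$ entirely, instead appealing to an unverified claim that $Z$-vertices already have degree at least $2$ in Montgomery's construction. But adding edges to a bipartite graph can never destroy a perfect matching (Hall's condition is monotone under edge addition), so there is nothing dangerous about augmenting $Z$-vertices. The paper's fix exploits exactly this: adjoin a single matching from the degree-$1$ vertices of $Y\cup Z$ (whether in $Y$ or in $Z$) to $X$, which raises their degree to $2$, increases any degree by at most $1$, and cannot affect any of the required perfect matchings. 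This removes both the need for your $Z$-degree claim and your proposed ``$2$-regular bipartite graph between $X$ and $Y$'' (which in any case cannot exist with $|X|=3m\ne 2m=|Y|$; you would need an asymmetric bounded-degree gadget instead).
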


%(For technical reasons we need $\delta(B)\ge 2$ which can be obtained by adjoining to $B$ an arbitrary matching covering any degree-1 vertices in $Y_B\cup Z_B$.) \pmo{Is it clear that there are at most $3m$ deg 1 vxs in $Y_B\cup Z_B$?}
Here, we use a version of the robustly matchable bipartite graph which requires a weak minimum degree condition for the construction of our absorbers to make sense. This lemma follows from the statement and proof of \cite[Lemma 2.8]{mont2}, where the graph given in fact has minimum degree at least 20 and maximum degree at most 40, though the statement does not mention the minimum degree as it was not needed for the main result in \cite{mont2}. %noting that the construction of the graph in fact given by Nenadov and Pehova in \cite{np}, adding a weak minimum degree condition to their statement.

\subsection{Absorbing gadgets}

Let $\ell \in \mathbb{N}$ and let $\mbf G = (G_1,\ldots,G_{4\ell+2})$ be a graph collection on a common vertex set $[n]$. Given a set $L$ of $\ell+1$ vertices we say that an edge-coloured graph $F$ with vertex set $L\cup R$ is an \emph{$L$-absorbing gadget} with \emph{endpoints} $x,y\in R$ if for every $v\in L$ there exists an $x,y$-path $P_v=e_1e_2\ldots e_{4\ell+2}\subseteq F$ satisfying
\begin{enumerate}
\item $V(P_v) = R\cup \{v\}$, and
\item $e_i \in G_i$ for every $i \in [4\ell +2]$.
\end{enumerate}

We now give an explicit construction of such an absorbing gadget. We define the edge-coloured graph $F_\ell$ as follows. 
Let $V(F_\ell) =A \cup B \cup C$ be a disjoint union of vertices such that $A=\{a_1, \ldots, a_{\ell+1}\}$, $B=\{b_0, \ldots, b_{3\ell+1}\}$ and $C=\{c_1, \ldots, c_{\ell}\}$. Then let $E(F_{\ell})=\{a_{i+1}b_{3i}, a_{i+1}b_{3i+1}: i \in \{0, 1, \ldots, \ell\}\} \cup \{c_{i+1}b_{3i}, c_{i+1}b_{3i+1}, c_{i+1}b_{3i+3}, c_{i+1}b_{3i+4}: i \in \{0, 1, \ldots, \ell-1\} \}\cup \{b_{3i+1}b_{3i+2}, b_{3i+2}b_{3i+3}: i \in \{0, \ldots, \ell-1\}\}$. 
Now colour edges $a_1b_0$ and $b_0c_1$ with colour $1$, and edges $a_{\ell+1}b_{3\ell}$ and $b_{3\ell}c_{\ell}$ with colour $4\ell+1$. 
We also colour edges $a_1b_1$, $b_1c_1$ with colour $2$ and $a_{\ell+1}b_{3\ell+1}$, $b_{3\ell+1}c_{\ell}$ with colour $4\ell+2$. 
For $i \in [\ell-1]$, we give $a_{i+1}b_{3i}$, $b_{3i}c_{i}$, and $b_{3i}c_{i+1}$, colour $4i+1$,
and $a_{i+1}b_{3i+1}$, $b_{3i+1}c_i$, and $b_{3i+1}c_{i+1}$ colour $4i+2$. Finally, for $i \in \{0, \ldots, \ell-1\}$ and $j \in \{1,2\}$, we colour edges $b_{3i+j}b_{3i+j+1}$ in colour $4i+j+2$. Note that $F_\ell$ is not rainbow (see Figure~\ref{fig:absorbing-gadget} for an illustration). We now show that $F_\ell$ is an $A$-absorbing gadget with $R=B\cup C$.

\begin{figure}
    \centering\input{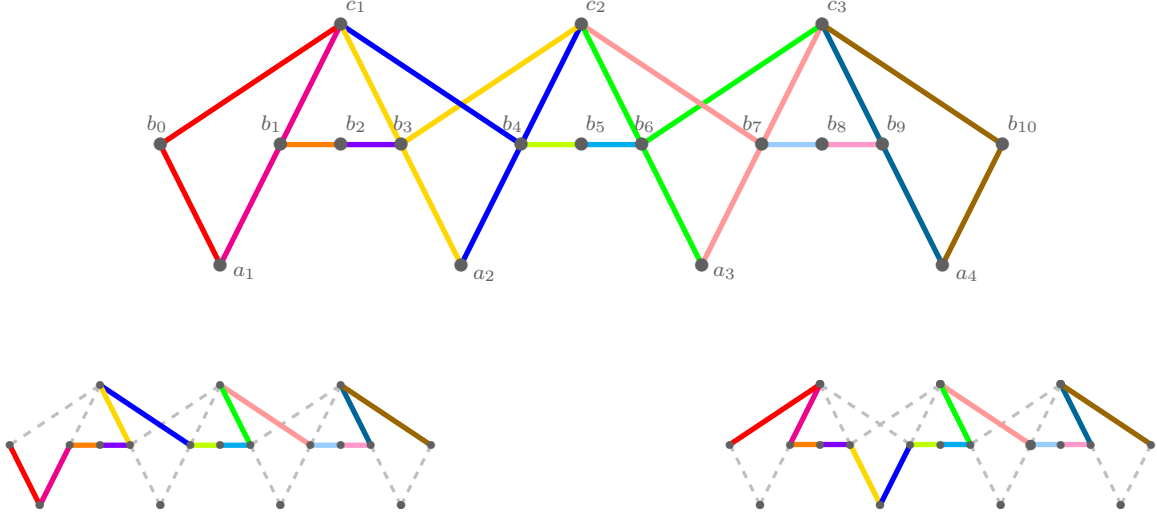}
    \caption{The absorbing gadget $F_3$ and the way it absorbs $a_1\in A$ and $a_2\in A$. The other two vertices $a_3$ and $a_4$ are absorbed symmetrically.}
    \label{fig:absorbing-gadget}
\end{figure}

\begin{prop}\label{prop:absgag}
Let $\ell\in \mb N$, let $\mathbf{G}=(G_1, \ldots, G_{4\ell+2})$ be a graph collection on a common vertex set $V=[n]$, and suppose that $A\subseteq V$ is a set of $\ell+1$ vertices. Then any copy of $F_\ell$ in $\mbf G$ with vertex set $A\cup B\cup C$ is an $A$-absorbing gadget with endpoints $b_1,b_{3\ell+1}\in B$.
\end{prop}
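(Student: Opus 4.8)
The plan is to exhibit the absorbing paths explicitly and then verify by an index chase that they have the correct vertex sets and colour sequences. First note that in $F_\ell$ the vertex $b_0$ has degree $2$ with both incident edges of colour $1$, and $b_{3\ell+1}$ has degree $2$ with both incident edges of colour $4\ell+2$; since the sought path $P_v=e_1\ldots e_{4\ell+2}$ uses each colour exactly once and must contain $b_0$ and $b_{3\ell+1}$, these two vertices are forced to be its endpoints. So I would set $x:=b_0$ and $y:=b_{3\ell+1}$, both of which lie in $B\sub R:=B\cup C$. After relabelling we may assume the given copy of $F_\ell$ is $F_\ell$ itself, so that every edge of $F_\ell$ of colour $i$ lies in $G_i$; it then suffices to produce, for each $v=a_{k+1}$ with $k\in\{0,1,\ldots,\ell\}$, a $b_0,b_{3\ell+1}$-path $P_v$ in $F_\ell$ with $V(P_v)=R\cup\{v\}$ whose $t$-th edge has colour $t$ for every $t\in[4\ell+2]$.

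The construction of $P_v$ proceeds block by block. For each $i\in\{0,\ldots,\ell\}$ I would designate the vertex ``absorbed at block $i$'' to be $w_i:=a_{k+1}$ if $i=k$, $w_i:=c_{i+1}$ if $i<k$, and $w_i:=c_i$ if $i>k$, and then take $P_v$ to be the concatenation, over $i=0,1,\ldots,\ell$ in order, of the segments $b_{3i}\,w_i\,b_{3i+1}$, glued between consecutive values of $i$ via the two connector edges $b_{3i+1}b_{3i+2}$ and $b_{3i+2}b_{3i+3}$ (recall $b_{3i+3}=b_{3(i+1)}$). Concretely, for $\ell\ge 1$,
\[P_v=b_0\,w_0\,b_1\,b_2\,b_3\,w_1\,b_4\,b_5\,b_6\,w_2\,b_7\,\ldots\,b_{3\ell}\,w_\ell\,b_{3\ell+1},\]
which has $2(\ell+1)+2\ell=4\ell+2$ edges (the case $\ell=0$ being the trivial path $b_0\,a_1\,b_1$).

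Two verifications remain, both of which I expect to be mechanical. First, that $P_v$ is a path on $R\cup\{v\}$: the $b$-vertices occurring are exactly $b_0,\ldots,b_{3\ell+1}$, each once, and as $i$ runs over $\{0,\ldots,\ell\}\sm\{k\}$ the vertices $w_i$ run over $C$ bijectively (for $c_j$ one has $w_i=c_j$ precisely when $i=j-1\le k-1$ or $i=j>k$, and exactly one of these occurs since $1\le j\le\ell$), while $w_k=a_{k+1}=v$; hence $V(P_v)=B\cup C\cup\{v\}$ with no repetitions. Second, that the colours line up: the edges of $P_v$ in order are $b_{3i}w_i$, $w_i b_{3i+1}$, and (for $i<\ell$) $b_{3i+1}b_{3i+2}$, $b_{3i+2}b_{3i+3}$, and from the definition of $F_\ell$ each of $a_{i+1}b_{3i}$, $c_i b_{3i}$, $c_{i+1}b_{3i}$ (whenever it exists) has colour $4i+1$, each of $a_{i+1}b_{3i+1}$, $c_i b_{3i+1}$, $c_{i+1}b_{3i+1}$ has colour $4i+2$, and $b_{3i+1}b_{3i+2}$, $b_{3i+2}b_{3i+3}$ have colours $4i+3$ and $4i+4$. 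Since $w_i\in\{a_{i+1},c_i,c_{i+1}\}$ in all three cases, every edge of $P_v$ is present in $F_\ell$, and reading the colours off in order yields exactly $1,2,\ldots,4\ell+2$; hence the $t$-th edge lies in $G_t$.

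The only point that needs real attention — and, I expect, the only genuine content — is the bookkeeping at the two extreme blocks $i=0$ and $i=\ell$, where the incident edges and their colours come from the two special colouring rules in the construction of $F_\ell$ (colours $1,2$ at one end, colours $4\ell+1,4\ell+2$ at the other) rather than from the generic rule for $i\in[\ell-1]$; one checks directly that these special rules assign precisely the colours $4i+1,4i+2$ predicted above, so no case is lost. The automorphism of $F_\ell$ given by $b_j\leftrightarrow b_{3\ell+1-j}$, $c_j\leftrightarrow c_{\ell+1-j}$, $a_j\leftrightarrow a_{\ell+2-j}$ together with reversal of the colour order ($t\leftrightarrow 4\ell+3-t$) can be used to halve the verification, reflecting the symmetry noted in the caption of Figure~\ref{fig:absorbing-gadget}.
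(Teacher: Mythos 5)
Your proof is correct and follows essentially the same approach as the paper's: you exhibit the same explicit $b_0,b_{3\ell+1}$-paths (with the cases $k=0$, $0<k<\ell$, $k=\ell$ unified via the single assignment $w_i$) and verify the colour sequence $1,\ldots,4\ell+2$ by direct inspection of the definition of $F_\ell$. The observation that $b_0$ and $b_{3\ell+1}$ are forced endpoints because they are the unique vertices incident only to edges of colours $1$ and $4\ell+2$ respectively is a nice motivating remark not in the paper, though not needed for the proof.
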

\begin{proof}
To prove this proposition, it suffices to show that for every $a_i\in A$ we can find a $b_0,b_{3\ell+1}$-path $P_{a_i}=e_1e_2 \ldots e_{4\ell+2}$ in $F_\ell$ such that  
\begin{enumerate}
\item $V(P_{a_i})=B\cup C\cup \{a_i\}$, and
\item $e_i \in G_i$ for every $i \in [4\ell +2]$.
\end{enumerate}

For this, suppose first that $i=1$. Then $P_{a_1}$ consists of the segments $b_0a_1b_1b_2b_3$ and, for $i \in [\ell-1]$, $b_{3i}c_{i}b_{3i+1}b_{3i+2}b_{3i+3}$, and $b_{3\ell}c_{\ell}b_{3\ell+1}$. Similarly if $i=\ell+1$, then take $P_{a_{\ell+1}}$ consisting of the segments  $b_{3i}c_{i+1}b_{3i+1}b_{3i+2}b_{3i+3}$ for $i \in \{0,\ldots, \ell-1\}$, and $b_{3\ell}a_{\ell+1}b_{3\ell+1}$. Suppose now that $1< i \leq \ell$. Then $P_{a_i}$ consists of the following segments. For $1\leq j <i$, take $b_{3j-3}c_jb_{3j-2}b_{3j-1}b_{3j}$, for $j = i$ take $b_{3j-3}a_jb_{3j-2}b_{3j-1}b_{3j}$, for $i<j \leq \ell$ take $b_{3j-3}c_{j-1}b_{3j-2}b_{3j-1}b_{3j}$ and finally take $b_{3\ell}c_{\ell}b_{3\ell+1}$.
\end{proof}

We claim that we can greedily find sufficiently disjoint $L$-absorbing gadgets for a suitable collection of sets $L$, as required, each of which is a copy of $F_\ell$ with $A=L$.

\begin{lemma}[Robust existence of absorbing gadgets]\label{lemma:absgag}
Let $0<1/n \ll 1/\ell, \alpha \leq 1$ and let $\mathbf{G}=(G_1, \ldots, G_{4\ell+2})$ be a graph collection on a common vertex set $V=[n]$ with $\delta(\mathbf{G}) \geq (1/2+\alpha)n$. Moreover, let $U \subseteq V$ be such that $|U| \leq \alpha n/4$. Then for every $L \subseteq V$ of size $\ell+1$ there is an $L$-absorbing gadget in $\mbf G$ avoiding vertices in $U\setminus L$. 
\end{lemma}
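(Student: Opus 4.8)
The plan is to combine Proposition~\ref{prop:absgag} with a greedy, vertex-by-vertex embedding. By that proposition it is enough, given $L$ of size $\ell+1$, to find a copy of the coloured graph $F_\ell$ inside $\mathbf{G}$ whose $A$-part is mapped onto $L$ and whose $B\cup C$-part avoids $U$: the vertex set of such a copy is $L\cup \phi(B\cup C)$, which is disjoint from $U\setminus L$, and the copy is an $L$-absorbing gadget by Proposition~\ref{prop:absgag}. So I would fix an arbitrary bijection $\phi\colon A\to L$ and extend it, one vertex at a time, to an embedding of $B\cup C$ into $V\setminus(U\cup L)$.

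The main point is to embed the vertices of $B\cup C$ in an order $w_1,\ldots,w_{4\ell+2}$ in which every $w_k$ has \emph{at most two} neighbours in $F_\ell$ inside $A\cup\{w_1,\ldots,w_{k-1}\}$ (call this its \emph{back-degree}, counting vertices of $A$ as coming first). Such an order exists; for instance one can take
\[
b_0,\ b_1,\ c_1,\qquad b_3,b_4,b_2,c_2,\qquad b_6,b_7,b_5,c_3,\qquad\ldots,\qquad b_{3\ell-3},b_{3\ell-2},b_{3\ell-4},c_\ell,\qquad b_{3\ell},b_{3\ell+1},b_{3\ell-1},
\]
i.e.\ the triple $b_0,b_1,c_1$, then for $j=2,\ldots,\ell$ the quadruple $b_{3j-3},b_{3j-2},b_{3j-4},c_j$, and finally $b_{3\ell},b_{3\ell+1},b_{3\ell-1}$. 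A somewhat tedious but routine inspection of the adjacencies of $F_\ell$ confirms the claim: each $c_j$ has no neighbour in $A$ and is placed before exactly two of its four neighbours $b_{3j-3},b_{3j-2},b_{3j},b_{3j+1}$; each $b_{3i}$ and each $b_{3i+1}$ has the single $A$-neighbour $a_{i+1}$ and is placed after at most one of its remaining neighbours; and each $b_{3i+2}$ has no $A$-neighbour and only two neighbours in total. This is exactly the low-degeneracy property for which $F_\ell$ was designed.

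With this order fixed, I would run the greedy embedding. Suppose $\phi$ has been defined on $A\cup\{w_1,\ldots,w_{k-1}\}$, let $u_1,\ldots,u_d$ (with $d\le2$) be the neighbours of $w_k$ in $F_\ell$ lying in this set, and let $i_r\in[4\ell+2]$ be the colour of the edge $w_ku_r$ in $F_\ell$. We must choose $\phi(w_k)\in\bigcap_{r=1}^d N_{G_{i_r}}(\phi(u_r))$ avoiding the forbidden set $U\cup\phi(A\cup\{w_1,\ldots,w_{k-1}\})$. Since $\delta(\mathbf{G})\ge(1/2+\alpha)n$, each $N_{G_{i_r}}(\phi(u_r))$ misses at most $(1/2-\alpha)n$ vertices, so by inclusion–exclusion the intersection has size at least $2\alpha n$; the forbidden set has size at most $|U|+|V(F_\ell)|\le\alpha n/4+5\ell+3$, which is less than $\alpha n/2$ since $1/n\ll1/\ell,\alpha$. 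Hence at least $3\alpha n/2>0$ valid choices remain for $\phi(w_k)$, and we take any one. Once $B\cup C$ is exhausted, every edge of $F_\ell$ has had its later endpoint checked against the earlier one in the prescribed colour (every edge joins two embedded vertices, and edges incident to $A$ have their $B\cup C$-endpoint embedded last), so $\phi(F_\ell)$ is a copy of $F_\ell$ in $\mathbf{G}$ with $A$-part $L$ and $B\cup C$-part disjoint from $U$, which is what we needed.

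The main obstacle is the second step: producing an ordering of $B\cup C$ with back-degree at most $2$. This genuinely uses the careful construction of $F_\ell$ — a back-degree of $3$ would already be useless once $\alpha<1/6$, since three neighbourhoods of size $(1/2+\alpha)n$ can have empty common intersection, and a naive left-to-right ordering of $F_\ell$ only achieves back-degree $4$. Granting the ordering, the inclusion–exclusion estimate and the greedy extension are entirely routine.
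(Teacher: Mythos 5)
Your proof is correct and follows essentially the same approach as the paper: reduce via Proposition~\ref{prop:absgag} to embedding a copy of $F_\ell$ with $A$ mapped onto $L$ and $B\cup C$ avoiding $U$, exhibit a $2$-degenerate ordering of $V(F_\ell)$ having $A$ as initial segment, and then embed greedily using an inclusion--exclusion count on two neighbourhoods at each step. The only differences are cosmetic: the paper invokes its auxiliary Lemma~\ref{lemma:degenk} (with $k=2$ and $Z=V$) in place of your inline greedy argument, and it records a slightly different but equally valid $2$-degenerate ordering, namely $b_0,b_1,c_1,b_3,b_4,c_2,b_6,b_7,\ldots,c_\ell,b_{3\ell},b_{3\ell+1},b_2,b_5,\ldots,b_{3\ell-1}$.
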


The proof of this lemma will be a simple consequence of the next auxiliary result.
We say that a graph $G$ has {\it degeneracy $k$} if there exists an ordering of the vertices of $G$ such that every vertex has at most $k$ previous neighbours in the ordering. The next lemma essentially tells us that given a graph collection $\mathbf{G}=\{G_1, \ldots, G_m\}$ of graphs on a common vertex set $[n]$ satisfying $\delta(\mathbf{G})\geq (\frac{k-1}{k}+\aA)n $, for any small graph $J$ of degeneracy $k$  and $\chi: E(J) \rightarrow [m]$, there robustly exist exactly $\chi$-coloured copies of $J$ in $\mathbf{G}$. In fact the lemma says something slightly stronger, allowing us to extend a partial embedding of $J$ respecting $\chi$ under certain additional conditions.  
We will only apply the lemma in the case $Z=V$ and $k=2$.

\begin{lemma}\label{lemma:degenk}
Let $0<1/n \ll \aA,1/k < 1$ and 
let $\mathbf{G}=\{G_1,\ldots,G_m\}$ be a graph collection on a common vertex set $V=[n]$. Further, 
 let $Z \subseteq V(G)$
 %with $|Z| \geq \aA n$ 
 be such that $ d_{G_i}(v,Z) \geq \left(\frac{k-1}{k}+\aA\right)|Z|$ for all $i \in [m]$
and $v \in V$.
Let $J$ be a graph on at most $\aA|Z|/4$ vertices and let $\chi : E(J) \to [m]$ be an edge-colouring of $J$.
Let $I \subseteq V(J)$ be an independent set of $J$ and let $f: I \to V$ be an injection.
Suppose that there is an ordering of $V(J)$ with initial segment $I$ (in any order)
such that each vertex $v$ has at most $k$ previous neighbours.
Let $U \subseteq V \sm f(I)$ satisfy $|U \cap Z| \leq \aA|Z|/4$.

Then there is an exactly $\chi$-coloured copy of $J$ avoiding $U$
where $x$ is mapped to $f(x)$ for each $x \in I$, and every vertex in $V(J) \sm I$ is mapped into $Z$.
\end{lemma}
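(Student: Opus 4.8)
The plan is to embed the vertices of $J$ one at a time, following the given ordering whose initial segment is $I$, using a greedy argument. The vertices in $I$ are already assigned by $f$, so the work is to handle the vertices of $V(J)\sm I$ in order, each time mapping the current vertex $v$ into $Z$. At the step where we embed $v$, it has at most $k$ previously embedded neighbours $w_1,\dots,w_r$ (with $r\le k$), which have been mapped to vertices $f(w_1),\dots,f(w_r)$, and each edge $vw_j$ carries a colour $\chi(vw_j)\in[m]$. We need to find an image for $v$ in $Z$ that is adjacent to $f(w_j)$ in $G_{\chi(vw_j)}$ for every $j$, while avoiding the forbidden set consisting of $U\cap Z$, the (at most $|V(J)|$) vertices already used as images, and $f(I)$ restricted to $Z$.

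First I would set up the forbidden set $W\subseteq Z$: it contains $U\cap Z$, all vertices of $Z$ already used as images of earlier vertices of $V(J)\sm I$, and those elements of $f(I)$ lying in $Z$. Since $|V(J)|\le \aA|Z|/4$ and $|U\cap Z|\le \aA|Z|/4$, we get $|W|\le \aA|Z|/2$. Now the key computation: for the vertex $v$ with previous neighbours $w_1,\dots,w_r$, we want a common neighbour in $Z$ of $f(w_1),\dots,f(w_r)$ in the respective colours, avoiding $W$. By the degree hypothesis, each set $N_{G_{\chi(vw_j)}}(f(w_j),Z)$ has size at least $\left(\frac{k-1}{k}+\aA\right)|Z|$, i.e.\ it misses at most $\left(\frac1k-\aA\right)|Z|$ vertices of $Z$. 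Taking the union of these (at most $k$) complements together with $W$, the total number of forbidden vertices in $Z$ is at most $k\left(\frac1k-\aA\right)|Z|+\aA|Z|/2 = |Z| - k\aA|Z| + \aA|Z|/2 \le |Z|-\aA|Z|/2 < |Z|$, so a valid image for $v$ exists. We choose it, add it to the forbidden set, and continue to the next vertex in the ordering.

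Since each vertex of $V(J)\sm I$ is embedded into $Z$ avoiding $U$ and all previously used images, and since we only ever use colour $\chi(e)$ on the edge $e$ (each edge of $J$ is incident to at least one endpoint that appears later in the ordering, at which point its colour constraint is checked), the resulting map is an injective homomorphism realising an exactly $\chi$-coloured copy of $J$ that avoids $U$, maps $I$ via $f$, and sends $V(J)\sm I$ into $Z$. One small point to verify in writing: every edge of $J$ does get its colour constraint enforced — this holds because in the ordering, for any edge $uv$ the later endpoint has the earlier one as a ``previous neighbour'', and edges within $I$ do not exist since $I$ is independent; so no edge is missed. The main (mild) obstacle is just bookkeeping the forbidden set so that its size stays below $|Z|$ throughout; the arithmetic above shows the slack $\aA|Z|/2$ is comfortably enough, so there is no genuine difficulty, only care in the constants.
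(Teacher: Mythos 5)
Your proposal is correct and follows essentially the same greedy vertex-by-vertex embedding argument as the paper, using the degeneracy ordering to bound each vertex's number of previously embedded neighbours by $k$ and the degree hypothesis to find a suitable image in $Z$ avoiding the accumulated forbidden set. The bookkeeping and the final arithmetic match the paper's proof.
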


A special case is Proposition~\ref{prop:cherries}, when $J$ is a cherry and $I$ is the pair of endpoints. 

\begin{proof}[Proof of Lemma \ref{lemma:degenk}]
Let $|V(J)|=:h$ and $v_1, \ldots, v_h$ be an ordering of the vertices of $J$ with initial segment $I$ such that each vertex has at most $k$ previous neighbours. For each $i \in [h]$, let $V_i:=\{v_1, \ldots, v_i\}$.  We will use these sets to describe the set of previous neighbours of each vertex in the ordering of $V(J)$, in particular, $ d_J(v_{i+1},V_i)\le k$ for all $i\in [h]$ by our degeneracy assumption. Assume $I$ is an independent set in $J$, and that the injection $f:I \rightarrow V$ is given. 
We find an injective map $g:V(J) \rightarrow (f(I)\cup Z)\sm U$ such that $g|_I=f$ and 
%$\mbf G[g(V(J))]$ contains an exactly $\chi$-coloured copy of $J$. 
$g(u)g(v)\in G_{\chi(uv)}$ for all $uv\in E(J)$.
Suppose that $|I|=s$. Then we map the vertices $v_{s+1}, \ldots, v_h$ one-by-one, as follows. Suppose we have already mapped $v_{1}, \ldots, v_{i}$ for some $s \leq i < h$. Then we wish to map $v_{i+1}$. Now, for every vertex $v \in N_{J}(v_{i+1},V_i)$, we want $g(v_{i+1})\in Z\sm U$ to be such that $g(v)g(v_{i+1}) \in G_{\chi(vv_{i+1})}$. Looking at $N_{G_{\chi(vv_{i+1})}}(g(v),Z)$ for every $v \in N_{J}(v_{i+1},V_i)$, since $ d_{J}(v_{i+1},V_i) \leq k$ and $ d_{G_i}(v,Z) \geq (\frac{k-1}{k}+\alpha)|Z|$ for all $i \in [m]$, it follows that $|\bigcap_{v \in N_{J}(v_{i+1},V_i)} N_{G_{\chi(vv_{i+1})}}(g(v),Z)| \geq \alpha |Z|$. 
Thus, avoiding the at most $\alpha|Z|/4$ vertices in $U \cap Z$ and at most $\alpha|Z|/4$ previously chosen vertices $g(v_1),\ldots,g(v_i)$, there are at least $\alpha|Z|/2$ choices for $g(v_{i+1})\in Z\sm U$. 
Fixing any such choice for each of $v_{s+1}, \ldots, v_h$ defines $g$ as required. 
That is, such a map yields an exactly $\chi$-coloured copy of $J$ in $\mbf G[f(I)\cup Z]$ avoiding $U$, where $x$ is mapped to $f(x)$ for each $x \in I$. 
\end{proof}

\begin{proof}[Proof of Lemma \ref{lemma:absgag}]
Let $0<1/n \ll 1/\ell,\aA \leq 1$, and let $L \subseteq V$ with $|L|=\ell+1$. By Proposition \ref{prop:absgag} and Lemma \ref{lemma:degenk} applied with $Z=V$ and $k=2$, it suffices to show that $F_\ell$ has an ordering of the vertices such that $L$ forms the initial segment and every vertex has at most two previous neighbours. Then any copy of $F_\ell$ in $\mbf G$ (guaranteed by Lemma~\ref{lemma:degenk}) with initial segment $L$ will be an $L$-absorbing gadget. One such ordering of $V(F_\ell)$ is to take $a_1,\ldots,a_{\ell+1},b_0,b_1,s_1,\ldots,s_{\ell},b_2,b_5,\ldots,b_{3i-1},\ldots,b_{3\ell-1}$, where for each $i \in [\ell]$, $s_i=c_i,b_{3i},b_{3i+1}$.
\end{proof}

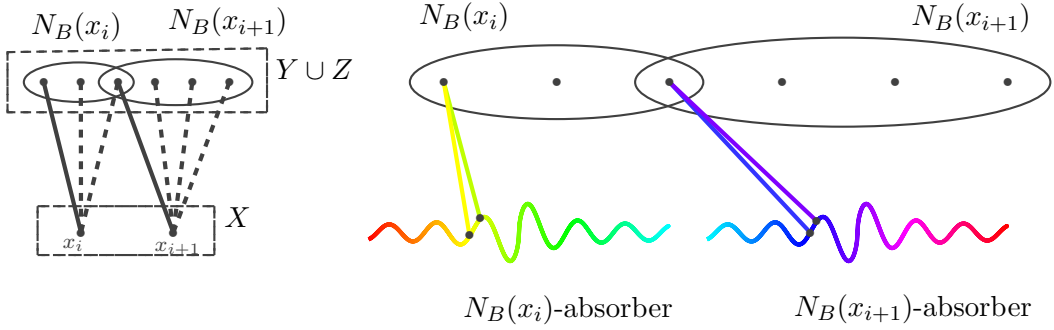
\begin{figure}
    \centering
    \newrgbcolor{ffttww}{1. 0.2 0.4}
\newrgbcolor{uququq}{0.25 0.25 0.25}
\newrgbcolor{mygreen}{0.4 1 0}
\newrgbcolor{myteal}{0.2 1. 0.8}
\psset{xunit=0.5cm,yunit=0.5cm,algebraic=true,dimen=middle,dotstyle=o,dotsize=5pt 0,linewidth=2.pt,arrowsize=3pt 2,arrowinset=0.25}
\begin{pspicture*}(-14.523025407626294,-2.890510721352683)(14.405075454007827,6.416554654638744)
\pspolygon[linewidth=0.8pt,linestyle=dashed,dash=3pt 3pt,linecolor=uququq](-13.6,3.2)(-6.7,3.2)(-6.7,4.9)(-13.6,4.8)
\pspolygon[linewidth=0.8pt,linestyle=dashed,dash=3pt 3pt,linecolor=uququq](-12.8,-0.6)(-8.1,-0.6)(-8.1,0.7)(-12.8,0.7)

\psset{plotpoints=300,linewidth=1.6pt}
\psparametricplotHSB[algebraic,HueBegin=0,HueEnd=0.5]{-4}{4}{ t | 2*sin(5*t)/(5*(t^2)^(1/4))}
\psparametricplotHSB[algebraic,HueBegin=0.5,HueEnd=1]{5}{13}{ t | 2*sin(5*(t-9))/(5*((t-9)^2)^(1/4))}

\rput{0.}(0.9991926854432662,4.){\psellipse[linewidth=0.8pt,linecolor=uququq](0,0)(3.901113819873431,0.9823453475335086)}
\rput{0.}(8.60803389916491,4.){\psellipse[linewidth=0.8pt,linecolor=uququq](0,0)(5.5205331437195095,1.184477842068631)}
\rput[tl](-2.657877735309759,6.117204598685803){$N_B(x_i)$}
\rput[tl](10.438788535928694,6.089341013188477){$N_B(x_{i+1})$}
\rput[tl](-1.4716910314709785,-1.665896856090653){$N_B(x_i)$-absorber}
\rput[tl](7.316124502807845,-1.6386832146403858){$N_B(x_{i+1})$-absorber}
\newrgbcolor{myblue}{0.2 0.2 1.}
\psline[linecolor=myblue, linewidth=1.6pt](4.,4.)(7.743362938564151,0.)
\newrgbcolor{myviolet}{0.4980392156862745 0. 1.}
\psline[linecolor=myviolet,linewidth=1.6pt](4.,4.)(7.909204383634306,0.31055636004788233)
\newrgbcolor{mygreen}{0.7490196078431373 1. 0.}
\psline[linecolor=mygreen,linewidth=1.6pt](-2.,4.)(-1.0370687869746776,0.3874887971621966)
\newrgbcolor{myyellow}{1 1 0}
\psline[linecolor=myyellow,linewidth=1.6pt](-2.,4.)(-1.3189702925479404,-0.054681504656606)
\psline[linewidth=1.6pt,linecolor=uququq](-12.637948160495183,4.)(-11.645987966519298,0.)
\psline[linewidth=1.6pt,linestyle=dashed,dash=3pt 3pt,linecolor=uququq](-11.645987966519298,0.)(-11.650431064754812,4.)
\psline[linewidth=1.6pt,linestyle=dashed,dash=3pt 3pt,linecolor=uququq](-11.645987966519298,0.)(-10.662913969014442,4.)
\psline[linewidth=1.6pt,linecolor=uququq](-10.662913969014442,4.)(-9.192114958332118,0.)
\psline[linewidth=1.6pt,linestyle=dashed,dash=3pt 3pt,linecolor=uququq](-9.675396873274071,4.)(-9.192114958332118,0.)
\psline[linewidth=1.6pt,linestyle=dashed,dash=3pt 3pt,linecolor=uququq](-9.192114958332118,0.)(-8.6878797775337,4.)
\psline[linewidth=1.6pt,linestyle=dashed,dash=3pt 3pt,linecolor=uququq](-7.7003626817933295,4.)(-9.192114958332118,0.)
\rput{0.}(-9.13125480011103,4.){\psellipse[linewidth=0.8pt,linecolor=uququq](0,0)(2.0415756510719025,0.6104851760967689)}
\rput{0.}(-11.704647063368565,4.){\psellipse[linewidth=0.8pt,linecolor=uququq](0,0)(1.4610297619681982,0.5255314159817697)}
\rput[tl](-12.97184784496106,5.8450681841831305){$N_B(x_i)$}
\rput[tl](-9.325219890625243,5.981136391434466){$N_B(x_{i+1})$}
\psline[linewidth=0.8pt,linestyle=dashed,dash=3pt 3pt,linecolor=uququq](-13.6,3.2)(-6.7,3.2)
\psline[linewidth=0.8pt,linestyle=dashed,dash=3pt 3pt,linecolor=uququq](-6.7,3.2)(-6.7,4.9)
\psline[linewidth=0.8pt,linestyle=dashed,dash=3pt 3pt,linecolor=uququq](-6.7,4.9)(-13.6,4.8)
\psline[linewidth=0.8pt,linestyle=dashed,dash=3pt 3pt,linecolor=uququq](-13.6,4.8)(-13.6,3.2)
\rput[tl](-6.467787538347179,4.620454318921101){$Y\cup Z$}
\psline[linewidth=0.8pt,linestyle=dashed,dash=3pt 3pt,linecolor=uququq](-12.8,-0.6)(-8.1,-0.6)
\psline[linewidth=0.8pt,linestyle=dashed,dash=3pt 3pt,linecolor=uququq](-8.1,-0.6)(-8.1,0.7)
\psline[linewidth=0.8pt,linestyle=dashed,dash=3pt 3pt,linecolor=uququq](-8.1,0.7)(-12.8,0.7)
\psline[linewidth=0.8pt,linestyle=dashed,dash=3pt 3pt,linecolor=uququq](-12.8,0.7)(-12.8,-0.6)
\rput[tl](-7.828469610860543,0.6744763086323373){$X$}
\begin{scriptsize}
\psdots[dotsize=3pt 0,dotstyle=*,linecolor=uququq](-2.,4.)
\psdots[dotsize=3pt 0,dotstyle=*,linecolor=uququq](10.,4.)
\psdots[dotsize=3pt 0,dotstyle=*,linecolor=uququq](4.,4.)
\psdots[dotsize=3pt 0,dotstyle=*,linecolor=uququq](7.,4.)
\psdots[dotsize=3pt 0,dotstyle=*,linecolor=uququq](1.,4.)
\psdots[dotsize=3pt 0,dotstyle=*,linecolor=uququq](13.,4.)
\psdots[dotsize=3pt 0,dotstyle=*,linecolor=uququq](7.743362938564151,0.)
\psdots[dotsize=3pt 0,dotstyle=*,linecolor=uququq](7.909204383634306,0.31055636004788233)
\psdots[dotsize=3pt 0,dotstyle=*,linecolor=uququq](-1.0370687869746776,0.3874887971621966)
\psdots[dotsize=3pt 0,dotstyle=*,linecolor=uququq](-1.3189702925479404,-0.054681504656606)
\psdots[dotsize=3pt 0,dotstyle=*,linecolor=uququq](-11.645987966519298,0.)
\rput[bl](-12.128224960002774,-0.4957102737291579){\uququq{$x_i$}}
\psdots[dotsize=3pt 0,dotstyle=*,linecolor=uququq](-9.192114958332118,0.)
\rput[bl](-9.678997229478718,-0.6317784809804945){\uququq{$x_{i+1}$}}
\psdots[dotsize=3pt 0,dotstyle=*,linecolor=uququq](-12.637948160495183,4.)
\psdots[dotsize=3pt 0,dotstyle=*,linecolor=uququq](-8.6878797775337,4.)
\psdots[dotsize=3pt 0,dotstyle=*,linecolor=uququq](-10.662913969014442,4.)
\psdots[dotsize=3pt 0,dotstyle=*,linecolor=uququq](-11.650431064754812,4.)
\psdots[dotsize=3pt 0,dotstyle=*,linecolor=uququq](-9.675396873274071,4.)
\psdots[dotsize=3pt 0,dotstyle=*,linecolor=uququq](-7.7003626817933295,4.)
\end{scriptsize}

\end{pspicture*}
    \caption{Two consecutive vertices $x_i$ and $x_{i+1}$ in $B$, their neighbourhoods, and an illustration of their absorbing gadgets. Matching $x_i$ and $x_{i+1}$ via the two solid edges on the left corresponds to absorbing the two vertices shown on the right.}
    \label{fig:absorbing-path}
\end{figure}

\subsection{The absorbing path}

We now turn to proving Lemma \ref{lemma:abspath}. Our strategy is to use the bipartite graph $B$ given by  Lemma \ref{lemma:rmbg} as an auxiliary template which instructs our choice of where to place absorbing gadgets. In more detail, we will take such a bipartite graph $B$ and label the vertices so that   $Z_B\subseteq V(B)$ corresponds to the set $Z\setminus \{z_1,z_2\}\subset V=[n]$ given in Lemma \ref{lemma:abspath} and $Y_B$ corresponds to some other subset $Y\subset V\setminus Z$ which we can choose arbitrarily. Then for each $x\in X_B$ we will find some $L_x$-absorbing gadget in $\mathbf{G}$ where $L_x\subset Y\cup Z$ is the set of vertices corresponding to $N_B(x)\subset Y_B\cup Z_B$. These absorbing gadgets will be chosen to be vertex-disjoint outside of $Y\cup Z$. All the absorbing gadgets will then be connected via repeated applications of Proposition \ref{prop:cherries} to get one absorbing path whose vertex set $A\cup Z$ comprises all the vertices used in absorbing gadgets and in connections. In order to see that this concatenation of absorbing gadgets fulfills the requirements of Lemma \ref{lemma:abspath}, we will use the key property of the bipartite graph $B$ in Lemma \ref{lemma:rmbg} which guarantees that after deleting vertices from $Z\setminus \{z_1,z_2\}$ to obtain some $Z'$ (and taking $Z'_B\subset Z_B$ corresponding to $Z'$), there is a matching in $B$ between $Y_B\cup Z_B'$ and $X_B$. This in turn defines which vertex to include in the path output by the $L_x$-absorbing gadget, for each $x\in X$.  See Figure \ref{fig:absorbing-path} for an illustration of this. The union of these paths given by the individual gadgets will then be the required path covering $A\cup Z'$. The details follow.

\begin{proof}[Proof of Lemma \ref{lemma:abspath}]
Let $0<1/n \ll \gG \ll \bB \ll \aA \leq 1$, and let $s:=\beta n$ be an integer. 
Let $X_B,Y_B$ and $ Z_B$ be disjoint sets of vertices of size $3s$, $2s$ and $s + \gG  n$ respectively. 
By Lemma \ref{lemma:rmbg} there exists a bipartite graph $B$ with parts $X_B=\{x_1, \ldots, x_{3s}\}$ and $Y_B \cup  Z_B$ such that $2 \leq \dD(B) \leq \Delta(B) \leq 40$ and for every subset $Z_B' \subseteq  Z_B$ of size $s$, $B[X_B, Y_B \cup Z_B']$ has a perfect matching. 
Let $a := 4e(B)-\bB n + 1 \leq 500 \bB n$ and let $t = a+\beta n+1= 4e(B)+2$ be as given.

Let $\mbf G = (G_1,\ldots,G_t)$ be a graph collection on $V=[n]$, where $\dD(\mbf {G}) \geq (1/2+\aA)n$ and let $Z \subseteq V$ of size $s+\gG n+2$ with two vertices $z_1,z_2\in Z$ as given in the statement of the lemma.  We further fix some  $Y \subseteq V \sm Z$ of size $2s$.
We identify the  vertices of $Y_B\subset V(B)$ with the vertices of $Y\subset V$ and likewise the vertices of $Z_B\subset V(B)$ with the vertices in $Z\sm \{z_1,z_2\}\subset V$.

For each $i \in [3s]$, we let $L_i\subset Y\cup Z\setminus \{z_1,z_2\}$ be the set corresponding to $N_B(x_i)\subset Y_B\cup Z_B$ and let $\ell_i=d_B(x_i)-1$ (here we are using $\delta(B)\ge 2$ to ensure $\ell_i$ is a positive integer).  We will build an $L_i$-absorbing gadget in a subcollection of $\mbf{G}$ of size exactly $4\ell_i+2=4d_B(x_i)-2$ such that these subcollections are disjoint for distinct $i$. 
For each $i \in [3s]$, define colours
%\begin{linenomath} 
\begin{equation*}
\mu_i := 4d_B(x_1)+\ldots+4d_B(x_{i-1})+3 
\quad\text{and}\quad
\nu_i := \mu_i + 4d_B(x_i)-3 = 4d_B(x_1)+\ldots + 4d_B(x_i)
 \end{equation*} 
 %\end{linenomath}
and a graph collection $\mbf G^i := (G_{\mu_i},G_{\mu_i+1},\ldots,G_{\nu_i})$.
Note that $\mu_1=3=\mu_{i+1}-\nu_i$ for all $i \in [3s-1]$, and $\nu_{3s}=4e(B)$.
Informally, we are breaking the set of colours from $\mbf G$ into chunks of exactly the right size to build each required absorbing gadget leaving gaps of length exactly two so that we can subsequently connect these via cherry connections in precisely the required colours (using Proposition~\ref{prop:cherries}).

Set $U_1:=Y\cup Z$, $A_0:=Y$ and for each $i\in [3s]$ in turn, apply Lemma~\ref{lemma:absgag} to find an $L_i$-absorbing gadget $F^i$ in $\mbf G^i$ avoiding $U_i\setminus L_i$,  with endpoints labelled as $b^i_-,b^i_{+}$.
Set $U_{i+1}:=U_i\cup V(F^i)$ and $A_i:=(A_{i-1}\cup V(F^i))\sm Z$. 
Note that each such absorbing gadget uses exactly $4 d_B(x_i)-2\le 158$ vertices outside $Y \cup Z$ so at each step $i$ we have $|U_i|\le |U_{3s}|\le  3s\cdot 158+|Y\cup Z|\le 478\beta n \leq \alpha n/4$, as required by Lemma~\ref{lemma:absgag}.

Now, in order to complete the proof of the lemma, we iteratively apply Proposition~\ref{prop:cherries} (using that $\delta(\mbf G)\geq (1/2+\alpha)|V|$ here) to connect $z_1$ to $b_-^1$ with a cherry using colours $1$ and $2$ (that is, graphs $G_1$ and $G_2$) in this order, $b_{+}^{3s}$ to $z_2$ with a cherry using colours $\nu_{3s}+1=t-1$ and $\nu_{3s}+2=t$, and for each $i \in [3s -1]$, $b_{+}^i$ to $b_-^{i+1}$ with a cherry using colours $\nu_i+1$ and $\nu_i+2=\mu_{i+1}-1$, each time using vertices which have not already been used. Note that every time we wish to find such a cherry, there are at most $|U_{3s}|+3s+1< 500\beta n <\alpha n$ vertices to avoid. Add all internal vertices of the $3s+1$ cherries constructed in this step to $A_{3s}$ to obtain the set $A$. Note that $|A_{3s}|=4e(B)-6\beta n+|A_0|$ since each $A_i$ adds exactly $4 d_B(x_i)-2$ vertices to $A_{i-1}$.
We have $
|A| = |A_{3s}|+ (3s+1) = 4e(B)-\bB n+1=a$
and all colours are used since $t=\nu_{3s}+2=4e(B)+2$.

It remains to verify that $A$ has the required absorbing property. Take $Z'\subseteq Z\sm \{z_1,z_2\}$ of size $\beta n$ and let $Z'_B\subseteq Z_B$ be the corresponding vertex subset of $B$. By Lemma~\ref{lemma:rmbg}, $B$ has a perfect matching $M=\{x_1w_1,\ldots,x_{3s}w_{3s}\}$ covering $X_B\cup Y_B\cup Z_B'$. For each $i$, let $w'_i\in L_i\subset Y\cup Z'$ be the vertex corresponding to $w_i\in Y_B\cup Z'_B$. Then since $F^i$ is an $L_i$-absorbing gadget, there is a rainbow path $P_i\subseteq \mbf G^i$ between $b_-^i$ and $b_{+}^i$ covering $(V(F^i)\sm L_i)\cup \{w'_i\}$ and using colours $\mu_i,\mu_i+1,\ldots,\nu_i$ in order. Taking all these paths together with the already fixed cherries, we obtain a $z_1,z_2$-path whose internal vertices cover $A\cup Z'$ exactly.  \end{proof}

\section{Almost spanning path cover} \label{sec:path cover}
In this section, we prove Lemma \ref{lemma:pathcoll} giving a collection of paths covering almost all of the vertex set of our graph. 

\begin{proof}[Proof of Lemma \ref{lemma:pathcoll}]
We will use the following random greedy algorithm to obtain the required paths.
First, remove at most one vertex from each $V_j$ so that they all have size $n_1 := \lfloor n/K\rfloor$.
Moreover,   let  $s \leq (1-\eps)n/K$ as given in the statement of the lemma.

\vspace{5pt}
  \begin{algorithm}[H]
  %  \SetAlgoNoEnd
    \caption{Path builder}\label{alg:path}
  Initialise by fixing $V^1_j:=V_j$  for $j\in [K]$ and $n_1$ as above
  \\
    \For{$i = 1, 2, \dotsc, s$}{    
      \eIf{$\delta(\mathbf{G}[V^i_j,V^i_{j+1}])< n_i/2$ for some $j\in[K-1]$}{
        Abort and return an error.
      }{
      \For{$j=1,2,\dotsc,K-1$} {
      Let $M_j^i$ be an arbitrary perfect matching in $G_{\chi_i(j)}[V_j^i,V_{j+1}^i]$ (which exists by Lemma~\ref{lem:hall} as we did not abort)}
      Consider the union $\bigcup_j M_j^i$ of these perfect matchings, which consists of $n_i$ vertex-disjoint exactly $\chi_i$-coloured paths, each of length $K-1$.\\
Choose one path uniformly at random and label it $P_i$. \\
Let $V_j^{i+1} := V_j^i \sm V(P_i)$ for all $j \in [K]$ and let $n_{i+1} := n_i-1$. 
         }
    }
  \end{algorithm}
  \vspace{5pt}
Note that this simple algorithm succeeds in producing the required family of paths $P_1,\ldots, P_s$ as long as the algorithm does not abort at any step $i$. We will now prove that with high probability this is indeed the case. 

In order to analyse this algorithm, we use a nibble approach and group together the steps of the algorithm into small collections, each of which we will show behaves well. To this end, we define $\gamma := \eps\alpha/(3K)$ and $t:=\lceil{s/(\gamma n)}\rceil$. Moreover, for $\ell=1,\ldots, t$, let $i_\ell:=(\ell-1)\gamma n+1$. 
%It suffices to show that the algorithm does not abort, as then the required collection of paths is $\mathcal{P}_1 \cup \ldots \cup \mathcal{P}_{s}$.
Finally, let  $\delta := 1-((1+\alpha)/(1+2\alpha))^{1/t}$, 
and note that
\begin{equation} \label{eq:deltabd}
(1-\delta)^\ell(1/2+\alpha) \geq 1/2+\alpha/2
\quad\text{for all }\ell \in [t]. \end{equation}

Now for $\ell\in [t]$,
 let 
$\mathcal{B}_\ell$ be the bad event that $\delta(\mathbf{G}[V_j^{i_\ell},V_{j+1}^{i_\ell}])<(1-\delta)^\ell(1/2+\alpha)n_{i_\ell}$ for some $j \in [K-1]$.
Define a stopping time $T$ to be the minimum $\ell$ such that $\mathcal{B}_\ell$ holds. We will show that, with high probability, $T>t$.

First we claim that, for $\ell\in [t]$, if $T>\ell$, then the algorithm does not abort in steps $i=i_\ell,\ldots,\min\{i_{\ell+1}-1,s\}$.
Indeed, for any $i_\ell \leq i < i_{\ell+1}$, we have
%\begin{linenomath}
    \begin{equation*}
\delta(\mathbf{G}[V_j^i,V_{j+1}^i]) \geq  \delta(\mathbf{G}[V_j^{i_\ell},V_{j+1}^{i_\ell}])-(i-i_\ell) \geq (1/2+\alpha/2)n_{i_\ell}-\gamma n>
n_{i_\ell}/2 \geq n_i/2,
\end{equation*} 
%\end{linenomath}
since 
\begin{equation} \label{eq:ni}
n_{i_\ell} = \lfloor n/K\rfloor - (i_\ell-1) \geq n/K - (1-\eps)n/K = \eps n/K > 2\gamma n/\alpha.    \end{equation}
%Thus Dirac's theorem implies that we can complete Substep $i$ and find the required perfect matching $M_j^i$.
This shows in particular that if $T>t$, then the algorithm succeeds at every step and never aborts.

    We now show that with high probability, $T>t$. By our minimum degree assumption, note that $T>1$. For further steps, we upper bound $\mathbb{P}(T=\ell+1 \ | \ T>\ell)$ for $\ell=1,\ldots,t-1$.  Fixing one such $\ell$, 
let $Y^\ell_j := (V(P_{i_\ell}) \cup \ldots \cup V(P_{i_{\ell+1}-1})) \cap V_j$ which, as a function of the random choices made in steps~$i_\ell,\ldots, i_{\ell+1}-1$,
is a random variable. Moreover, under the condition that $T>\ell$, as shown above, the algorithm will not abort in these steps and so this random variable is well-defined. 
Finally, note that under the condition that $T>\ell$ we have that for all $j \in [K]$,  $Y^\ell_j$ is a uniform random set chosen among all subsets of $V_j^{i_\ell}$ of size $\gamma n$.
(Note, though, that $Y^\ell_j$ and $Y^\ell_{j'}$ are dependent for different $j,j'$.)

Therefore, for each $c \in [n]$, $j \in [K]$, $x \in V_j^{i_\ell}$ and $g \in \{+1,-1\}$, the random variable $d^g_c(x) := |N_{G_c}(x,V^{i_\ell}_{j+g}) \sm Y^\ell_{j+g}|$
has hypergeometric distribution (we do not define this for $j+g=0,K+1$).
We have
    %\begin{linenomath} 
    \begin{equation*}
\mathbb{E}(d^g_c(x) | \neg \mc{B}_\ell) = \mathbb{E}\left(\left(\frac{n_{i_\ell}-\gamma n}{n_{i_\ell}}\right)d_{G_c}(x,V^{i_\ell}_{j+g})~ \middle\vert ~\neg \mc{B}_\ell \right)
\geq (1-\delta)^\ell(1/2+\alpha)n_{i_{\ell+1}}.
         \end{equation*}
%\end{linenomath}
Thus Lemma~\ref{lem:chernoff} (Chernoff bounds) implies that
 %\begin{linenomath}
 \begin{align*}
\mathbb{P}\left(d^g_c(x) < (1-\delta)^{\ell+1}(1/2+\alpha)n_{i_{\ell+1}} ~\middle\vert~ \neg \mc{B}_\ell\right) 
&\leq 2\exp\left(-\delta^2(1-\delta)^\ell(1/2+\alpha)n_{i_{\ell+1}}/3\right) \\&\leq 2\exp(-\delta^2\gamma n/(3\alpha)) \leq \exp(-\sqrt{n}),
\end{align*} 
using \eqref{eq:deltabd} and \eqref{eq:ni} in the penultimate inequality here.
%\end{linenomath}
Thus, taking a union bound over at most $n\cdot n \cdot 2$  events, corresponding to a choice of $x,c$ and $g$, we get that  $\mathbb{P}(T=\ell+1 \ | \ T>\ell) \leq \exp(-n^{1/3})$.

Finally, we have that 
\[\mathbb{P}(T\leq t)=\sum_{\ell=0}^{t-1}\mathbb{P}(T=\ell+1 \ | \ T>\ell)\leq t\exp(-n^{1/3})=o(1),\]
using our estimate above and the fact that $\mathbb{P}(T=1)=0$.  This completes the proof, showing with high probability the algorithm succeeds at every step and produces the required collection of paths $P_1,\ldots,P_s$. 
\end{proof}

\section{Concluding remarks} \label{sec:conclude}

Given an extremal threshold for a certain subgraph with $m$ edges, one can ask whether the same (asymptotic) condition applied to a family of $m$ graphs $\mbf G$ guarantees the existence of a transversal copy of that subgraph. This question was posed by Joos and Kim \cite{jooskim} and formalised by Gupta, Hamann, M\"uyesser, Parczyk and Sgueglia \cite{ghmps} through the notion of \emph{colour-blindness}. Following our work here, one can ask whether this condition in fact guarantees universality for transversal copies of the subgraph in question.

Formalising slightly, one can define the \emph{transversal threshold} for a given $m$-edge subgraph $J$ to be the optimal extremal condition (for example a minimum degree or density condition) which, when applied to each graph of an $m$-graph collection $\mbf G$, guarantees a transversal copy of $J$, and the \emph{pattern threshold} to be the optimal extremal condition guaranteeing \emph{all possible} rainbow colour patterns of $J$. Clearly the pattern threshold is always at least as large as the transversal threshold, which is in turn at least as large as the classical extremal threshold (by considering many copies of the same host graph as in the introduction). In this terminology, Joos and Kim \cite{jooskim} showed that for Hamiltonicity, the extremal and transversal minimum degree thresholds coincide exactly and our result here shows that the pattern threshold is also asymptotically the same. On the other hand, the construction given in Section \ref{sec:counterex} shows that the pattern threshold does not coincide exactly with the other two thresholds. 
It would be interesting to establish the location of more transversal and pattern thresholds for different subgraphs and whether they (exactly or asymptotically) coincide, or if there is separation.

As discussed in the introduction, previous work shows that for many spanning structures in graphs and hypergraphs, the extremal and transversal thresholds asymptotically coincide. Montgomery, M\"uyesser and Pehova \cite{mmp} give examples where the pattern and transversal thresholds for graph tilings are asymptotically the same, as well as when they are separated. They also give examples of tilings for which there is a separation between the extremal and transversal minimum degree thresholds, as is the case with the edge density threshold for a single triangle. Indeed, whilst Mantel's theorem gives that the extremal edge density for containing a triangle is $1/2$, Aharoni, DeVos, de la Maza, Montejano and \v{S}\'amal~\cite{Aharoni} showed that the transversal threshold is much higher. 
As all rainbow colourings of a triangle are isomorphic, the transversal and pattern thresholds trivially coincide. 

Before discussing further research directions, we also mention that these sorts of questions also make sense in non-extremal settings. Indeed, for any condition on a host graph which guarantees the existence of a certain subgraph, one can ask whether applying such a condition to each graph of a graph collection results in (all possible) transversal copies of the desired subgraph, or not. 

\subsection{Improving linear error}
In this paper, we showed that when a collection of graphs is asymptotically Dirac, one can find a Hamilton cycle whose edges come from the collection of graphs in any fixed order. Our proof requires the additional $o(n)$ term in the minimum degree of each graph, though we suspect that one could lower this error to a logarithmic or even large constant term, using some iterative ideas and more careful analysis. However, we believe that the construction in Section \ref{sec:counterex} yields the exact threshold, and in light of this it seems more pertinent to either obtain results that improve this $o(n)$ term to an explicit and relatively small constant, or find counterexamples to this being possible.

\subsection{Extensions}
There are many natural questions that relate to our work here, and for which we believe that the proof methods would be useful. We note several such extensions.

Considering Hamilton cycles in hypergraphs, Cheng, Han, Wang, Wang and Yang~\cite{chwwy} showed that $(k-1)$-degree $(1/2+o(1))n$ is sufficient to guarantee the existence of a rainbow tight Hamilton cycle in a $k$-uniform $n$-vertex hypergraph collection. We believe that the ideas behind our absorbing gadgets extend directly to this setting, but that one needs a slightly more involved strategy to obtain something equivalent to our cherry connections between pairs of fixed $(k-1)$-tuples, and to handle the `almost cover'. One could also consider Hamilton $\ell$-cycles and $d$-degree conditions for other values of $\ell$ and $d$ between $1$ and $k-1$, where our methods look to be useful, but additional ideas are needed to obtain tight results. 

Returning to the setting of graphs, beyond Hamilton cycles, one can ask whether our strategy extends to find bounded degree trees in any colour pattern. If the tree contains a long bare path, our absorption goes through immediately, and Lemma \ref{lemma:degenk} can in fact be used to embed absorbing gadgets for some substructures other than just paths. It would also be natural to consider powers of Hamilton cycles.

\section{Acknowledgements}

The work leading to these results was carried out at a workshop funded by the ERC Starting Grant 947978 of Richard Montgomery and hosted at the University of Warwick. We thank them for their hospitality and for providing a stimulating and enjoyable research environment, and the ERC for their support of this workshop.

We thank Alp M\"uyesser for suggesting this interesting problem at the \emph{UCL Workshop in Extremal and Probabilistic Combinatorics}; Mat\'{i}as Pavez-Sign\'{e} for helpful discussions;
and the referees for their careful reading and useful comments.

\bibliographystyle{abbrv}
\bibliography{transversal}

\end{document}